\newenvironment{proof}{\noindent {\bf Proof:}}{\hfill $\Box$}
\newtheorem{theorem}{Theorem}
\newtheorem{lemma}{Lemma}
\newtheorem{proposition}{Proposition}
\newtheorem{definition}{Definition}
\newtheorem{remark}{Remark}
\def\BState{\State\hskip-\ALG@thistlm}
\newcommand{\dist}{\mathrm{dist}}
\newcommand{\supp}{\mathrm{supp}}
\newcommand{\A}{\mathcal{A}}
\newcommand{\N}{\mathbb{N}}
\newcommand{\R}{\mathbb{R}}
\newcommand{\C}{\mathcal{C}}
\newcommand{\one}{1}
\title{\bf Converging outer approximations to global attractors using semidefinite programming}
\date{}
\begin{document}

\maketitle

\begin{center}
\author{Corbinian Schlosser$^{1}$},
\author{Milan Korda$^{\text{1},\text{2}}$}
\end{center}

\footnotetext[1]{CNRS; LAAS; 7 avenue du colonel Roche, F-31400 Toulouse; France. {\tt cschlosser@laas.fr, korda@laas.fr}}
\footnotetext[2]{Faculty of Electrical Engineering, Czech Technical University in Prague,
Technick\'a 2, CZ-16626 Prague, Czech Republic.}

\begin{abstract}
This paper develops a method for obtaining guaranteed outer approximations for global attractors of continuous and discrete time nonlinear dynamical systems. The method is based on a hierarchy of semidefinite programming problems of increasing size with guaranteed convergence to the global attractor. We follow an established line of reasoning, where we first characterize the global attractor via an infinite dimensional linear programming problem (LP) in the space of Borel measures. We then turn to the dual LP acting on the space of continuous functions. Feasible solutinos of the dual LP provide guaranteed outer approximations to the global attractor. For systems with polynomial dynamics, a hierarchy of finite-dimensional sum-of-squares tightenings of the dual LP provides a sequence of outer approximations to the global attractor with guaranteed convergence in the sense of volume discrepancy tending to zero. The method is very simple to use and based purely on convex optimization. Numerical examples with the code available online demonstrate the method.
\end{abstract}

\begin{flushleft}\small
{\bf Keywords:} Global attractor, outer approximations, dynamical systems, infinite-dimensional linear programming, sum-of-squares, semidefinite programming, occupation measures
\end{flushleft}

\section{Introduction}
The global attractors, i.e., sets to which all trajectories converge asymptotically, are typical and important objects in the analysis of dynamical systems. These sets have many intriguing and desirable properties; for instance, all trajectories can be approximated by trajectories on the global attractor asymptotically (see~\cite{c28neu} or Remark \ref{RemarkAttractorProperties} 2.). In terms of applications for control, the global attractor determines the set of all states to which trajectories asymptotically converge under a given feedback control law. This set may be a single point in the state-space or a more complicated object such as a periodic orbit, either by design or due to nonlinearities (e.g., dead-zone, backlash, quantization etc.) that prevent stabilization to a single point~\cite{c3neu}. Localizing this set in the state-space is therefore important for determining the asymptotic performance of the controller.

We approach the problem of state-space localization of the attractor from a convex optimization viewpoint, utilizing the so-called occupation measures (primal problem) or continuous functions (dual problem) defined on the state space; both of these problems are infinite-dimensional linear programs (LPs).  The approach builds on the work~\cite{c10neu} that characterizes and approximates computationally the maximum positively (or control) invariant set; the second ingredient of our approach is the characterization of the global attractor as the largest set that is invariant both forward and backward in time. This leads to an infinite dimensional LP characterization of the global attractor. In order to solve the resulting infinite dimensional linear program we apply the moment-sum-of-squares hierarchy (see, e.g., \cite{c14neu} and \cite{c16neu}), leading to a sequence of finite-dimensional semidefinite programming problems (SDPs). Solutions to these SDPs provide guaranteed enclosures of the global attractor which are proven to converge in terms of volume discrepancy tending to zero. We treat both the continuous and discrete time systems, with the latter being more challenging theoretically due to time reversal issues. The approach is very simple to use, with the outer approximations obtained  from the solution to a single convex SDP without any iteration or complicated initialization.

Historically, the idea of transforming various problems from nonlinear dynamical systems and control into infinite-dimensional LPs dates back at least to the work \cite{c29neu} dedicated to optimal control. Solving these problems by a hierarchy of semidefinite problems (SDPs) with proven convergence was proposed in \cite{c15neu}, although SDP approximations to infinite-dimensional LPs were used already in~\cite{c26neu} for the problem of global stabilization. Since then, this approach was used to tackle a number of problems, including the region of attraction~\cite{c8neu}, maximum (control) invariant and reachable sets~\cite{c10neu,c20neu} or, more recently, analysis and control of nonlinear partial differential equations~\cite{c6neu,c11neu,c21neu}, to name just a few. Closest to our work from this line of research is~\cite{c5neu}, treating the problem of estimating the maximum of a given function on the attractor.

A classical approach to attractors is via Lyapunov functions (see, e.g., \cite{c4neu}). There are many ans\"atze for constructing Lyapunov functions. Some use partial differential equations, like Zubov's equation, whose solutions give Lyapunov functions. A partial differential inequality of a Lyapunov type also occurs in our approach and partial differential equation appears in one of our proofs as a technical tool. Sum-of-squares methods have been systematically used for stability analysis of given fixed points or attractors through searching for polynomial Lyapunov functions (e.g.,~\cite[chapter 7]{c23neu}). However, the goal of this work is not stability analysis of a given attractor but rather localization of an unknown attractor in the state-space. Other possible approaches for approximating global attractors are, for example, the set oriented methods~\cite{c1neu}.
%This method can be viewed from a Perron-Frobenius operator perspective (see also \cite{c1neu}) which also allows a linear perspective towards global attractors, but -- although related through the Perron-Frobenius operator -- is different to our approach ({\color{red} The last sentence is not clear -- explain a bit more the difference or delete the sentence}).

\section{Notations}
The natural numbers are with zero included and denoted by $\N$. The function $\dist(\cdot,K)$ denotes the distance function to $K$ and $\dist(K_1,K_2)$ denotes the Hausdorff distance of two subsets of $\R^n$. The set of positive measures on $\R^n$ supported on $X$ are denoted by $M(X)$. The support of an element $\mu \in M(X)$ is denoted by $\supp(\mu)$. The space of continuous functions on $X$ is denoted by $\C(X)$ and the space of continuously differentiable functions on $\R^n$ by $\C^1(\R^n)$.
%The action (i.e., integration) of a measure $\mu \in M(X)$ on a function $g \in \C(X)$ is given by
%\begin{equation*}
%	\langle \mu,g\rangle := \int\limits_X f \; d\mu.
%\end{equation*}
The Lebesgue measure will always be denoted by $\lambda$ and its restriction to $X$ by $\lambda\big|_X$. The indicator function for a set $C \subset X$ is denoted by $\mathbb{I}_C:X \rightarrow \R$ and is given by $\mathbb{I}_C(x) = 1$ if $x \in C$ and $\mathbb{I}_C(x) = 0$ else. The ring of multivariate polynomials in variables $x = (x_1,\ldots,x_n)$ is denoted by $\R[x] = \R[x_1,\ldots,x_n]$ and for $k \in \N$ the ring of multivariate polynomials of total degree at most $k$ is denoted by $\R[x]_k$. We will denote the open ball of radius $r$ with respect to the euclidean metric by $B_r(0)$.

\section{Setting and preliminary definitions}
Let $X \subset \R^n$ be compact and $f:\R^n \rightarrow \R^n$ have locally Lipschitz continuous derivative. Let $\varphi_{t}(x_0)$ be the solution at time $t$ of
\begin{equation}\label{EqnODE}
	\dot{x} = f(x), \;\; x(0) = x_0 \in \R^n
\end{equation} 
The set of all initial values $x_0 \in X$ such that $\varphi_t(x_0)  \in X$ for all $t \in \R_+$ is called the maximal positively invariant (MPI) set and will be denoted by $M_+$ in the following.

\begin{definition}[Global attractor]
	A compact set $\A \subset X$ is called a global attractor if it is minimal uniformly attracting, i.e., it is the smallest compact set $\A$ such that
	\[
	\lim_{t\to\infty} \dist(\varphi_t(M_+),\A) = 0,
	\]
	where $M_+$ is the MPI set.
\end{definition}
%0 \leftarrow \dist(\varphi_t(A_{gobal}),A) = \dist(A_{global},A), hence A_{global} \subset \overline{A}= A.
\begin{remark} Initial conditions of trajectories that leave $X$ at some point (even though they may return to $X$) are not part of the global attractor $\A$. However, all trajectories that stay in $X$ for all positive times converge to $\A$ uniformly as $t \rightarrow \infty$. A global attractor is maximal invariant (forward and backward)~\cite{c28neu}, i.e., for all $t \in \R_+$ the flow $\varphi_t(x)$ is defined for all $x \in \A$ and we have $\varphi_t(\A) = \A$. Further, if a global attractor exists it is unique. This is an immediate consequence of being minimal attractive~\cite{c28neu}.
\end{remark}

It is worth mentioning some differences between the global attractor and the weak attractor $\A_w$ given by the smallest closed set that attracts all trajectories, i.e. for all $x \in M_+$
\begin{equation}\label{EqnWeakAttractor}
	\dist(\varphi_t(x),\A_w) \rightarrow 0 \text{ as } t \rightarrow \infty.
\end{equation}
Obviously, we have $\A_w \subset \A$ but those sets can differ significantly. A typical example that illustrates many differences is a dynamical system that is given by a heteroclinic orbit $\Gamma$, connecting an unstable equilibrium point $x_0$ with a stable equilibrium point $x_1$, e.g.,
\[
\dot{x} = (x+1)(1-x), \;\; x\in \R.
\]

 The weak attractor is given by $\A_w = \{x_0,x_1\} = \{-1,1\}$ while the global attractor is given by $\A = \Gamma = [-1,1] $. The latter is due to $\varphi_t(\Gamma) = \Gamma $ because $x_0 = -1$ is an unstable equilibrium point. The topological differences in this case are obvious. But one might wonder why it is of interest to work with the much larger set $\A$ instead of $\A_w$. This question is of course very reasonable and has good answers. We refer to \cite{c28neu} for an exhaustive treatment of global attractor and state some results from there here.
 
\begin{remark}\label{RemarkAttractorProperties}
The global attractor has the following properties
	\begin{enumerate}
		\item The global attractor is Lyapunov stable.\\
		An example for an unstable weak attractor is a heteroclinic orbit or Vinograd's example~\cite[p.\,115]{c22neu}.
		\item ``The global attractor approximates trajectories" \cite{c28neu}: For all $x \in M_+$, $T> 0$ and $\varepsilon > 0$ there exists a $t_0 = t_0(T,\varepsilon) \in \R_+$ (independent of $x$!) and $x_0 = x_0(x,T,\varepsilon) \in \A$ such that
		\begin{equation*}
			\|\varphi_t(x)-\varphi_t(x_0)\|_2^2< \varepsilon \text{ for all } t \in [t_0,t_0 + T]. 
		\end{equation*}
		Weak attractors have a similar property but in case of a weak attractor, the time $t_0$ can not be chosen uniformly in $x$, i.e. $t_0$ depends on $x$ in general. An example is given by a simplified version of the system from \cite{c9neu} p. 287 and \cite{c13neu} given by
		\begin{align*}
			\dot{\theta} = \sin(\theta)^2
		\end{align*}
		on the set $X = [0,2\pi)$ identified with the unit circle. The weak attractor is given by the equilibrium points, i.e. $\A_w = \{0,\pi\}$. Both equilibrium points are unstable and hence $t_0$ can not be chosen uniformly in the initial value.
		\item The global attractor is upper semicontinuous (see~\cite{c28neu} for the result and related definitions). That means small changes in the vector field can not cause a drastic increase in the global attractor.\\
		An example of a weak attractor that explodes by a small change of the vector field can be found in~\cite{c28neu} p. 267, where a spiraling trajectory turns into a periodic orbit.
		\item There exists a smooth Lyapunov function $V:M_+ \rightarrow \R$ such that $\frac{\mathrm{d}}{\mathrm{dt}}V(\varphi_t(x)) < 0$ for all $x \in M_+ \setminus V^{-1}(\{0\})$ and $V^{-1}(\{0\}) = \A$.\\
		Since the existence of such a Lyapunov function implies stability for the set $V^{-1}(\{0\})$ a weak attractor that is not stable (as in the first statement) does not allow such a Lyapunov function.
		\item If there exists a strict Lyapunov function $V$, i.e. $V$ is strictly decreasing in time except in equilibrium points, then by Lasalle's invariance principle, $\A$ is given by the unstable manifold of all equilibrium points, i.e. the points that are repelled by the equilibrium points. While the weak attractor is given by just the equilibrium points.
	\end{enumerate}

\end{remark}

The last theorem should not be understood as critic of weak attractors. Weak attractors still have interesting and good properties and tend to be much smaller sets than the global attractors. If the notion of a weak attractor fits the application this is definitely desirable. The purpose of stating the above theorem is to point out why we have a special interest in global attractors.

It seems reasonable that the global attractor is contained in the MPI set $M_+$ because it only cares about trajectories in the closed set $M_+$. On the other hand for any $x \in M_+$ such that $\varphi_{-t}(x) \in X$ for all $t \in \R_+$ (i.e. solving the differential equation backward in time) we know that $x = \varphi_t(\varphi_{-t}(x))$ has to be close to the global attractor for $t \in \R_+$ large enough (because $\A$ is uniformly attracting). This motivates point~3) in the following theorem but also introducing the maximal negatively invariant (MNI) set $M_-$ that consists of all points $x \in X$ such that $\varphi_{-t}(x) \in X$ for all $t \in \R_+$. In other words, the MNI set is the MPI set for the time reversed differential equation (i.e., for the ODE $\dot{x} = -f(x)$).\\
We will recall the following theorem on the existence of global attractors and connect the global attractor to $M_+$ and~$M_-$.
 
\begin{theorem}\label{ThmExistenceGlobalAttractor} Assume $X \subset \R^n$ is compact and $f:X\rightarrow \R^n$ is (locally) Lipschitz then\\
\vspace{-4mm}
	\begin{enumerate}
		\item The global attractor $\A$ exists.
		\item The dynamical system is invertible on the global attractor, i.e. for all $x \in \A$ we can define $\varphi_t(x)$ for all $t \in \R$ and $\varphi$ is continuous in $(t,x)$.
		\item The global attractor consists of all orbits of the solutions that exist for all times $t \in \R$.
		\item The global attractor is the maximal set that is positively invariant forward and backward in time, i.e. $\A = M_+ \cap M_-$.
	\end{enumerate}
\end{theorem}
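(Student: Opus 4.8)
The plan is to handle the four claims in the order stated, getting parts~1 and~2 from the classical $\omega$-limit set construction and then deducing the set characterizations~3 and~4. First I would record two structural facts about $M_+$. Since $X$ is compact and $(t,x)\mapsto\varphi_t(x)$ is continuous (local Lipschitz continuity of $f$ gives existence, uniqueness and continuous dependence on initial data), $M_+$ is closed, hence compact; and it is forward invariant, because $x\in M_+$ gives $\varphi_{t+s}(x)\in X$ for all $t\ge 0$, so $\varphi_s(x)\in M_+$ for every $s\ge 0$. Thus $\varphi$ restricts to a continuous semiflow mapping the compact set $M_+$ into itself, and $M_+$ plays the role of a compact absorbing set.

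For part~1 I would invoke the standard global attractor theorem for a continuous semiflow on a compact metric space applied to $M_+$: the $\omega$-limit set
\[
\A := \omega(M_+) = \bigcap_{s\ge 0}\overline{\bigcup_{t\ge s}\varphi_t(M_+)}
\]
is nonempty, compact, invariant, and attracts $M_+$ uniformly, $\dist(\varphi_t(M_+),\A)\to 0$; any compact set that attracts $M_+$ must contain all such limit points and therefore contains $\A$, which makes $\A$ minimal (see \cite{c28neu}). For part~2 I would use that invariance yields $\varphi_t(\A)=\A$ for all $t\ge 0$, while the forward flow is always injective by uniqueness of solutions; hence $\varphi_t|_\A$ is a continuous bijection of the compact set $\A$, i.e.\ a homeomorphism, so $\varphi_{-t}$ is well defined on $\A$ as its inverse for each $t\ge0$, giving joint continuity of $\varphi$ in $(t,x)$ on $\R\times\A$.

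The substance lies in parts~3 and~4, which I would prove together. Let $\mathcal K$ be the set of $x$ carrying a complete orbit $\{\varphi_t(x):t\in\R\}\subset X$. A point lies in $\mathcal K$ precisely when its forward orbit stays in $X$ (so $x\in M_+$) and its backward orbit exists and stays in $X$ (so $x\in M_-$), the backward orbit being globally defined because compactness of $X$ rules out blow-up; hence $\mathcal K=M_+\cap M_-$, reducing both claims to $\A=\mathcal K$. The inclusion $\A\subseteq\mathcal K$ is immediate from part~2, since every point of $\A$ carries a complete orbit contained in $\A\subset X$; in particular $\A\subseteq M_+\cap M_-$.

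The reverse inclusion $\mathcal K\subseteq\A$ is the step I expect to be the main obstacle, and it is exactly where uniform attraction of the whole set $M_+$ (rather than of individual trajectories) is indispensable. Given $x\in\mathcal K$ I would pull back along its complete orbit: the points $y_n:=\varphi_{-n}(x)$ lie in $M_+$, because $\varphi_t(y_n)=\varphi_{t-n}(x)\in X$ for all $t\ge 0$, and $x=\varphi_n(y_n)\in\varphi_n(M_+)$ for every $n\in\N$. Uniform attraction then forces
\[
\dist(x,\A)\le\sup_{y\in\varphi_n(M_+)}\dist(y,\A)=\dist(\varphi_n(M_+),\A)\xrightarrow{n\to\infty}0,
\]
so $\dist(x,\A)=0$ and $x\in\A$ by closedness. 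This establishes $\mathcal K\subseteq\A$ and completes parts~3 and~4; the only delicate point is that $x$ is a fixed target reached from $M_+$ at arbitrarily large times, which is legitimate precisely because the attraction estimate is uniform over $M_+$.
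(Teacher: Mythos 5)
Your proof is correct, but it is worth noting that the paper does not actually prove this theorem: its ``proof'' is a citation to \cite{c28neu}, Chapter~10, for statements 1--3, plus the one-line remark that statement~4 is ``just a reformulation'' of statement~3. You have effectively reconstructed the argument that the cited reference carries out: build $\A$ as the $\omega$-limit set $\omega(M_+)=\bigcap_{s\ge 0}\overline{\bigcup_{t\ge s}\varphi_t(M_+)}$ of the compact forward-invariant set $M_+$, get minimality from closedness of any attracting compact set, get invertibility on $\A$ from backward uniqueness of solutions plus the fact that a continuous bijection of a compact set is a homeomorphism, and then prove $\A=\mathcal K=M_+\cap M_-$ by pulling a point $x$ on a complete orbit back to $y_n=\varphi_{-n}(x)\in M_+$ and invoking \emph{uniform} attraction of $M_+$ to conclude $\dist(x,\A)\le\dist(\varphi_n(M_+),\A)\to 0$. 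This last step is the real content of your write-up and is exactly what makes the paper's ``reformulation'' of statement~4 rigorous: you correctly identify that $M_-$-membership together with compactness of $X$ (which rules out backward blow-up) is equivalent to carrying a complete orbit in $X$, and that the target $x$ being fixed while the transit time grows is legitimate only because attraction is uniform over $M_+$ --- the same point the paper gestures at informally in the paragraph preceding the theorem. What the paper's citation buys is generality (the reference treats semiflows, including infinite-dimensional ones) and brevity; what your proof buys is a self-contained argument in the finite-dimensional ODE setting that also exposes where each hypothesis is used.

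Two small glosses to tidy up: nonemptiness of $\omega(M_+)$ requires $M_+\neq\emptyset$ (if no trajectory stays in $X$, the attractor degenerates to the empty set, a case neither you nor the paper addresses); and since the theorem states $f:X\rightarrow\R^n$, you should either extend $f$ to a locally Lipschitz field on $\R^n$ (as the paper does elsewhere) or work with the local semiflow inside $X$ before speaking of $\varphi_t$ on neighborhoods of $M_+$. Joint continuity of $\varphi$ on $\R\times\A$ in part~2 is asserted rather than argued; it follows from uniqueness and continuous dependence for $\dot{x}=-f(x)$ near the compact invariant set $\A$, and deserves a sentence. None of these affects the correctness of the overall structure.
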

 
\begin{proof} A very good reference for the above result is \cite{c28neu} chapter 10 where the first three statements can be found. The last statement is just a reformulation of the third statement. 
%Only the last statement here because it is not explicitly stated in \cite{c28neu} and will be our main perspective on global attractors in the following. For a set $B\subset X$ to be the maximal set that is positively invariant forward and backward in time just means that it consists of all the orbits of solutions that can be extended to $\R$.
\end{proof}

\section{A linear program for global attractors}
In \cite{c10neu} a linear program (LP) for approximating the maximal positively invariant set was presented. Theorem \ref{ThmExistenceGlobalAttractor} shows that the global attractor is characterized as the largest set that is positively invariant forward and backward in time. Hence combining the LP for the maximal positively invariant set (in forward time) with the same LP in reversed time direction gives the global attractor. This is the LP we will derive next. %We will give the construction given in \cite{c10neu} here, too, since the paper \cite{c10neu} deals with the more complicated case of control and hence we can give an easier formulation.\\
For an initial measure $\mu_0 \in M(X)$ and a discount factor $\beta > 0$ let $\mu$ be the discounted occupation measure, i.e., for a measurable set $C\subset X$ the value $\mu(C)$ is defined by
\begin{equation}\label{DefOccMeasure}
 \mu(C) := \int\limits_X\int\limits_0^\infty e^{-\beta t} \mathbb{I}_{C}(\varphi_t(x)) \; dt \; d\mu_0(x).
\end{equation}
Then $\mu$ is a well defined measure on $X$ that measures the discounted average time spent in $C$ where the averaging is weighted by the initial measure $\mu_0$ (one can think of sampling initial conditions at random from the probability distribution given by $\mu_0$). By (\ref{DefOccMeasure}) and integration by parts, we get the following relation for all $v \in \C^1(\R^n)$, which we refer to as the (continuous-time) Liouville's equation
\begin{align}\label{EqnLiouville}
	\int\limits_X \nabla v \cdot f\;d\mu & = \int\limits_X \int\limits_0^\infty e^{-\beta t} \nabla v(\varphi_t(x)) f(\varphi_t(x)) \; dt \; d\mu_0(x) = \int\limits_X \int\limits_0^\infty e^{-\beta t} \frac{\partial}{\partial t} v(\varphi_t(x)) \; dt \; d\mu_0(x)\notag\\
	& = -\int\limits_X v \; d\mu_0 + \beta \int\limits_X \int\limits_0^\infty e^{-\beta t}v(\varphi_t(x)) \; d\mu_0(x) = \beta\int\limits_X v \; d\mu - \int\limits_X v \; d\mu_0.
\end{align}
The above equation is also a direct application of basic semigroup theory. For $v \in \C^1(\R^n)$, $Av:= \nabla v \cdot f$ is the action of the generator of the Koopman semigroup $T_tv := v \circ \varphi_t$ whose adjoint is called Perron-Frobenius semigroup and is given by the push forward $P_t\mu (C) := \mu(\varphi_t^{-1}(C))$ for measureable sets $C \subset X$. Hence the Laplace transform $\int\limits_0^\infty e^{-\beta t} P_t\mu_0 \; dt$ gives the resolvent $(\mathrm{Id} - \beta P_t)^{-1} \mu_0$ (\cite{c2neu} Theorem 1.10).

Heuristically the constraint that $\mu$ is a measure on $X$ (and not on a larger set) enforces that $\mu_0$ cannot be supported on a larger set than the MPI set $M_+$; otherwise, those points will flow out of $X$ in finite time, thereby making the support of $\mu$ larger than $X$.

A formal proof of the fact that whenever $(\mu,\mu_0)$ satisfies (\ref{EqnLiouville}), then $\supp(\mu_0)$ is contained in $M_+$ is given in \cite{c10neu}. On the other hand we can always choose $\mu_0$ to be the restriction of the Lebesgue measure to $M_+$ and $\mu$ its corresponding occupation measure (\ref{DefOccMeasure}). Hence maximizing the support of a measure $\mu_0$ satisfying the Liouville equation gives $M_+$. However, direct maximization of the support is computationally challenging. In order to circumvent this challenge, we follow the same strategy as in~\cite{c10neu}. Instead of maximizing the support we will maximize the mass $\mu_0(X)$ under the condition that $\mu_0$ is dominated by the Lebesgue measure which is equivalent to $\mu_0 + \hat{\mu}_0 = \lambda\big|_X$ for a $\hat{\mu}_0 \in M(X)$. That gives the following linear program
\begin{equation}\label{OptimizationProbMPI}
	\begin{tabular}{llc}
		$\sup$ & $\mu_0(X)$&\\
		s.t. & $ \mu_0,\hat{\mu}_0,\mu \in M(X)$ &\\
			 & $ \int\limits_X \beta v - \nabla v \cdot f \; d\mu = \int\limits_X v \; d\mu_0$ & $\forall v \in \C^1(\R^n)$\\
			 & $ \mu_0 + \hat{\mu}_0 \hspace{1mm} = \lambda\big|_X$ &
	\end{tabular}
\end{equation}
The invariance conditions for the reversed time direction is imposed by adding the Liouville equation (\ref{EqnLiouville}) induced by the vector field $-f$. This yields the following LP
\begin{align}\label{OptimizationProbA1}
	\begin{tabular}{llc}
		$\sup$ & $\mu_0(X)$\\
		s.t. & $ \mu_0,\hat{\mu}_0,\mu_+,\mu_- \in M(X)$ &\\
			 & $ \int\limits_X \beta v^1 - \nabla v^1 \cdot f \; d\mu_+ = \int\limits_X v^1 \; d\mu_0$ & $\forall v^1 \in \C^1(\R^n)$\\
			 & $ \int\limits_X \beta v^2 + \nabla v^2 \cdot f\; d\mu_- = \int\limits_X v^2 \; d\mu_0$ & $\forall v^2 \in \C^1(\R^n)$\\
			 & $ \mu_0 + \hat{\mu}_0 \hspace{3mm} = \lambda\big|_X$ &
	\end{tabular}
\end{align}
We have the following proposition.
 
\begin{proposition}\label{PropLowerBound}
	$\lambda(\A)$ is a lower bound for (\ref{OptimizationProbA1}).
\end{proposition}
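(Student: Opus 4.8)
The plan is to prove the lower bound by exhibiting an explicit feasible quadruple $(\mu_0,\hat{\mu}_0,\mu_+,\mu_-)$ for (\ref{OptimizationProbA1}) whose objective value $\mu_0(X)$ equals $\lambda(\A)$; since every feasible point lower-bounds the supremum, this suffices. The natural candidate is to concentrate the initial measure on the attractor itself. I would set $\mu_0 := \lambda\big|_{\A}$ and $\hat{\mu}_0 := \lambda\big|_{X \setminus \A}$. As $\A \subset X$ is compact and hence measurable, these are positive measures in $M(X)$, they satisfy the domination constraint $\mu_0 + \hat{\mu}_0 = \lambda\big|_X$ by construction, and the objective evaluates to $\mu_0(X) = \lambda(\A)$ as desired.

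It then remains to produce $\mu_+,\mu_- \in M(X)$ satisfying the two Liouville equations. Here I would use exactly the construction in (\ref{DefOccMeasure}): let $\mu_+$ be the discounted occupation measure of the forward flow started from $\mu_0$, and let $\mu_-$ be the discounted occupation measure of the time-reversed flow $\dot{x} = -f(x)$ started from $\mu_0$. The structural fact that makes this work is the double invariance of the attractor provided by Theorem \ref{ThmExistenceGlobalAttractor}: the flow $\varphi_t$ is defined for all $t \in \R$ on $\A$ and $\varphi_t(\A) = \A$, so for every $x \in \A$ and every $t \in \R_+$ both $\varphi_t(x)$ and $\varphi_{-t}(x)$ remain in $\A \subset X$. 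Consequently $\mu_+$ and $\mu_-$ are supported on $\A$ and thus lie in $M(X)$, and each has finite mass $\lambda(\A)/\beta$, obtained as the time integral $\int_0^\infty e^{-\beta t}\,dt$ multiplied by $\mu_0(X)$. The forward Liouville equation (\ref{EqnLiouville}) then holds verbatim for $\mu_+$, and repeating the same integration-by-parts computation with $-f$ in place of $f$ yields the second constraint for $\mu_-$.

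The step that requires the most care — and where the invariance of $\A$ is genuinely needed — is justifying the integration by parts underlying (\ref{EqnLiouville}), specifically the vanishing at $t = \infty$ of the boundary term $[e^{-\beta t} v(\varphi_t(x))]$. This term vanishes precisely because $v(\varphi_{\pm t}(x))$ stays bounded, which is guaranteed by the trajectories remaining in the compact set $\A$. For the forward equation this boundedness is the content of $\A \subset M_+$, and for the backward equation it is the content of $\A \subset M_-$; it is the combination $\A = M_+ \cap M_-$ from Theorem \ref{ThmExistenceGlobalAttractor} that allows the single initial measure $\mu_0 = \lambda\big|_{\A}$ to feed both occupation measures simultaneously while keeping all four measures supported on $X$. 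Everything else reduces to the well-definedness and finiteness of the occupation measures, which the discount factor $\beta > 0$ secures, and the main obstacle is therefore not computational but lies in correctly invoking backward invariance to make the reversed-time construction legitimate.
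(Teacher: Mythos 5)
Your proposal is correct and follows essentially the same route as the paper: take $\mu_0 = \lambda\big|_{\A}$, $\hat{\mu}_0 = \lambda\big|_{X\setminus\A}$, and let $\mu_+,\mu_-$ be the discounted occupation measures of (\ref{DefOccMeasure}) forward and backward in time, so that feasibility gives the objective value $\lambda(\A)$. The extra detail you supply — using $\varphi_t(\A) = \A$ for all $t\in\R$ to keep both occupation measures supported in $X$ and to kill the boundary term in the integration by parts — is exactly what the paper's terser proof implicitly relies on.
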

\begin{proof} For a measureable set $C$ let $\mu_0(C) := \lambda(\A \cap C)$ and $\hat{\mu}_0(C):= \lambda\left(\left(X \setminus \A\right) \cap C\right)$. Then $\mu_0, \hat{\mu}_0 \in M(X)$ and $\mu_0 + \hat{\mu}_0 = \lambda\big|_X$. Let $\mu_+$ and $\mu_-$ be the corresponding occupation measures with discount factor $\beta >0$ defined by (\ref{DefOccMeasure}) forward and backward in time. So $(\mu_0,\hat{\mu}_0,\mu_+,\mu_-)$ is feasible and has objective value $\mu_0(X) = \lambda(\A \cap X) = \lambda(\A) = \lambda\big|_X(\A)$.
\end{proof}

Unfortunately, for the primal approach, it is typical that the global attractor has vanishing Lebesgue volume. This is the case for globally asymptotically stable fixed points or periodic orbits but also for more complex dynamical systems as the Lorenz system. In such cases the trivial measure $\mu = 0$ is a minimizer of the primal LP. Although the primal LP gives the correct volume, it is not of much (direct) help of finding the global attractor. But the dual LP gives more insight into the global attractor as we will see in the next section.

\section{The dual LP}
The dual LP of (\ref{OptimizationProbA1}) is given by
\begin{equation}\label{OptimizationProbAp0}
	\begin{tabular}{llc}
		$\inf$ & $\int\limits_X w \; d\lambda$&\\
		s.t. & $(w,v_1,v_2)\in \C(\R^n) \times \C^1(\R^n) \times \C^1(\R^n)$&\\
			 & $-v^1-v^2 + w \geq \one$& \\
			 & $w \geq 0$ &\\
			 & $ \beta v^1 - \nabla v^1 \cdot f \geq 0$ &\\
			 & $ \beta v^2 + \nabla v^2 \cdot f \geq 0$ & 
	\end{tabular}
\end{equation}

The following lemma is a direct generalization of a corresponding result from~\cite{c10neu}. We state the proof taken from ~\cite{c10neu} as well because it is crucial for the paper.
 
\begin{lemma}\label{LemmaPositivity}
	We have $v^1 \geq 0$ on $M_+$, $v^2 \geq 0$ on $M_-$ and hence $w \geq 1$ on $\A$.
\end{lemma}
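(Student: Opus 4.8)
The plan is to prove each of the three claims by tracking the value of $v^1$ (respectively $v^2$) along forward (respectively backward) trajectories, exploiting the fact that the last two constraints of (\ref{OptimizationProbAp0}) are differential inequalities of Lyapunov type. The key observation is that $\beta v^1 - \nabla v^1 \cdot f \geq 0$ says exactly that $\frac{d}{dt} v^1(\varphi_t(x)) = \nabla v^1(\varphi_t(x)) \cdot f(\varphi_t(x)) \leq \beta\, v^1(\varphi_t(x))$ along any trajectory, i.e. $v^1$ can grow at most at the discounted rate $\beta$ as we flow forward. The strictly positive discount factor $\beta > 0$ is what will make the argument close.

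For the first claim I would fix $x \in M_+$, so that $\varphi_t(x) \in X$ for all $t \geq 0$ by definition of the MPI set, and set $g(t) := v^1(\varphi_t(x))$. Since $v^1 \in \C^1(\R^n)$ and $t \mapsto \varphi_t(x)$ is differentiable, the chain rule and the constraint give $g'(t) \leq \beta g(t)$. Multiplying by the integrating factor $e^{-\beta t}$ then yields $\frac{d}{dt}\big( e^{-\beta t} g(t)\big) = e^{-\beta t}\big(g'(t) - \beta g(t)\big) \leq 0$, so $e^{-\beta t} g(t)$ is non-increasing. Comparing its values at $0$ and at $t$ gives $v^1(x) = g(0) \geq e^{-\beta t} g(t) = e^{-\beta t} v^1(\varphi_t(x))$ for every $t \geq 0$. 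Because $X$ is compact and $\varphi_t(x)$ stays in $X$, the continuous function $v^1$ is bounded along the trajectory, so the right-hand side tends to $0$ as $t \to \infty$. Passing to the limit yields $v^1(x) \geq 0$, proving $v^1 \geq 0$ on $M_+$.

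The second claim is the mirror image under time reversal. For $x \in M_-$ the backward trajectory $\varphi_{-t}(x)$ is defined and remains in $X$ for all $t \geq 0$ by definition of the MNI set; the constraint $\beta v^2 + \nabla v^2 \cdot f \geq 0$ is precisely the forward-time Lyapunov inequality for the reversed field $-f$, so the same integrating-factor computation applied to $h(t) := v^2(\varphi_{-t}(x))$ gives $v^2(x) \geq e^{-\beta t} v^2(\varphi_{-t}(x)) \to 0$ and hence $v^2 \geq 0$ on $M_-$. Finally, on $\A = M_+ \cap M_-$ (Theorem \ref{ThmExistenceGlobalAttractor}) both $v^1 \geq 0$ and $v^2 \geq 0$ hold, so the first constraint $-v^1 - v^2 + w \geq \one$ immediately gives $w \geq 1 + v^1 + v^2 \geq 1$ on $\A$, completing the proof.

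The step I expect to demand the most care is the limiting argument: the conclusion hinges on $e^{-\beta t} v^1(\varphi_t(x)) \to 0$, which is valid only because the trajectory stays inside the compact set $X$ — this is exactly the content of $x \in M_+$, and is where the definition of the MPI set is indispensable. If the trajectory could leave $X$, then $v^1(\varphi_t(x))$ need not stay bounded and the discount might fail to suppress the term as $t \to \infty$. For the backward claim one additionally relies on the flow being well-defined in negative time along the whole of $M_-$, which is guaranteed directly by the definition of $M_-$ (and, on the attractor itself, by the invertibility asserted in Theorem \ref{ThmExistenceGlobalAttractor}, part 2).
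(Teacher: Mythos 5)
Your proof is correct and follows essentially the same route as the paper: the differential inequality integrated along the flow (your explicit integrating-factor computation is just Gronwall's lemma, which the paper cites) gives $v^1(x) \geq e^{-\beta t} v^1(\varphi_t(x))$, and boundedness of $v^1$ on the compact set $X$ plus the discount $\beta > 0$ sends the right-hand side to zero, with the $v^2$ claim handled by the same argument in reversed time and the conclusion $w \geq 1$ on $\A = M_+ \cap M_-$ from the first constraint. No gaps; your closing remarks correctly identify where membership in $M_+$ (resp.\ $M_-$) is indispensable.
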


\begin{proof} It suffices to show $v^1(x) \geq 0$ on $M_+$ because then the same arguments apply to $v^2$ (in reversed time direction). The fact that $w \geq 1$ on $\A$ follows then from $-v^1-v^2+w\geq 1$ and $\A = M_+ \cap M_-$.\\
Let us proof $v^1\geq 0$ on $M_+$. Feasibility implies
\begin{equation*}
	\beta v^1(x) \geq \nabla v^1(x) f(x) = \frac{\mathrm{d}}{\mathrm{ds}} v^1(\varphi_s(x)) \bigg|_{s = 0}.
\end{equation*}
Hence we get for all $x \in M_+$ and $t \in \R_+$
\begin{equation*}
	\beta v^1(\varphi_t(x)) \geq \frac{\mathrm{d}}{\mathrm{ds}} v^1(\varphi_s(\varphi_t(x))\bigg|_{s = 0} = \frac{\mathrm{d}}{\mathrm{ds}} v^1(\varphi_{s+t}(x))\bigg|_{s = 0} = \frac{\mathrm{d}}{\mathrm{dt}} v^1(\varphi_{t}(x))
\end{equation*}
Gronwall's lemma gives $v^1(\varphi_t(x)) \leq e^{\beta t} v^1(x)$ for all $t \in \R_+$, i.e.
\begin{equation}\label{EqnV^1Positive}
	v^1(x) \geq e^{-\beta t} v^1(\varphi_t(x)).
\end{equation}
But since $\varphi_t(x) \in X$ for all $t \in \R_+$ because $x \in M_+$ the right-hand side in (\ref{EqnV^1Positive}) converges to $0$ as $t \rightarrow \infty$. Hence $v^1(x) \geq 0$ for all $x \in M_+$.
\end{proof}

The above lemma indicates how a solution of the dual LP should look like, i.e., that $w = \mathbb{I}_\A$. Since $\mathbb{I}_\A$ is not continuous on $X$ (if $\A \neq X)$), this implies that the solution to~(\ref{OptimizationProbAp0}) is not attained; however, a minimizing sequence exists and its infimum is equal to the supremum in the primal problem~(\ref{OptimizationProbA1}), i.e., there is not duality gap. This is formalized in the following crucial result.

\begin{theorem}\label{Thm}
	For all $\beta > 0$ there is no duality gap and the optimal value of ~(\ref{OptimizationProbA1}) and (\ref{OptimizationProbAp0}) is given by $\lambda(\A)$. The infimum in the dual program is not attained unless $\A = X$. For a feasible solution $(v^1,v^2,w)$ of the dual problem we have $\A \subset w^{-1}([1,\infty))$.
\end{theorem}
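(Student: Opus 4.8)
The plan is to establish the three assertions in the order: the exact primal value together with weak duality, then the no-gap direction (construction of a dual minimizing sequence), then non-attainment; the pointwise bound $w\ge 1$ on $\A$ — equivalently $\A\subset w^{-1}([1,\infty))$ — is immediate, since Lemma \ref{LemmaPositivity} already gives exactly this. First I would pin down the primal value. Weak duality follows by pairing any primal-feasible $(\mu_0,\hat\mu_0,\mu_+,\mu_-)$ with any dual-feasible $(w,v^1,v^2)$: integrating $-v^1-v^2+w\ge\one$ against $\mu_0\ge 0$, then using the two Liouville constraints to replace $\int v^1\,d\mu_0$ and $\int v^2\,d\mu_0$ by $\int(\beta v^1-\nabla v^1\cdot f)\,d\mu_+\ge 0$ and $\int(\beta v^2+\nabla v^2\cdot f)\,d\mu_-\ge 0$, and finally $w\ge 0$ with $\mu_0+\hat\mu_0=\lambda\big|_X$ to get $\int w\,d\mu_0\le\int w\,d\lambda$; this yields $\mu_0(X)\le\int w\,d\lambda$. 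For the exact primal value I would bypass duality: the forward Liouville constraint forces $\supp(\mu_0)\subset M_+$ (the fact of \cite{c10neu} quoted above), and the same fact applied to $-f$ through the second constraint forces $\supp(\mu_0)\subset M_-$, so by Theorem \ref{ThmExistenceGlobalAttractor} $\supp(\mu_0)\subset M_+\cap M_-=\A$. Since $\hat\mu_0\ge 0$ gives $\mu_0\le\lambda\big|_X$, we get $\mu_0(X)=\mu_0(\A)\le\lambda(\A)$, so the primal value is $\le\lambda(\A)$; with Proposition \ref{PropLowerBound} the primal value equals $\lambda(\A)$.

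The heart of the proof is producing dual-feasible triples whose objective tends to $\lambda(\A)$. I would build them from the single-sided minimizing sequences of the MPI problem (\ref{OptimizationProbMPI}) in \cite{c10neu}: for the forward field $f$ there exist $v^1_k\in\C^1(\R^n)$ with $\beta v^1_k-\nabla v^1_k\cdot f\ge 0$ such that $w^+_k:=\max(0,\one+v^1_k)\to\mathbb{I}_{M_+}$ in $L^1(\lambda)$, and the single-sided version of Lemma \ref{LemmaPositivity} gives $v^1_k\ge 0$ on $M_+$ (whence $w^+_k=\one+v^1_k$ there, so $\int_{M_+}v^1_k\,d\lambda=\int_{M_+}(w^+_k-\one)\,d\lambda\to 0$). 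Applying the same construction to $-f$ produces $v^2_k$ with $\beta v^2_k+\nabla v^2_k\cdot f\ge 0$, $v^2_k\ge 0$ on $M_-$, and $w^-_k:=\max(0,\one+v^2_k)\to\mathbb{I}_{M_-}$ in $L^1(\lambda)$. Setting $w_k:=\max(0,\one+v^1_k+v^2_k)$ makes $(w_k,v^1_k,v^2_k)$ feasible for (\ref{OptimizationProbAp0}): the two differential inequalities hold by construction, $w_k\ge 0$ and $w_k\ge\one+v^1_k+v^2_k$ by definition, and $w_k$ is continuous. It then remains to show $\int w_k\,d\lambda\to\lambda(\A)$.

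For this estimate I would split $X$ into $\A$, $M_+\setminus M_-$, $M_-\setminus M_+$ and $X\setminus(M_+\cup M_-)$ and use subadditivity of the positive part $\max(0,a+b)\le\max(0,a)+\max(0,b)$. On $\A$ both $v^i_k\ge 0$, so $w_k\ge 1$ and $\int_\A(w_k-\one)\,d\lambda=\int_\A(v^1_k+v^2_k)\,d\lambda\le\int_{M_+}v^1_k\,d\lambda+\int_{M_-}v^2_k\,d\lambda\to 0$. On $M_+\setminus M_-$ one has $v^1_k\ge 0$ and the set lies in $X\setminus M_-$, so $w_k\le v^1_k+\max(0,\one+v^2_k)=v^1_k+w^-_k$, whose integral is at most $\int_{M_+}v^1_k\,d\lambda+\int_{X\setminus M_-}w^-_k\,d\lambda\to 0$; the set $M_-\setminus M_+$ is symmetric. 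On $X\setminus(M_+\cup M_-)$ the bound $w_k\le\max(0,\one+v^1_k)+\max(0,v^2_k)\le w^+_k+w^-_k$ gives, after restricting to $X\setminus M_+$ and $X\setminus M_-$, a vanishing integral as well. Summing the pieces yields $\limsup_k\int w_k\,d\lambda\le\lambda(\A)$; together with weak duality ($\int w_k\,d\lambda\ge\text{primal}=\lambda(\A)$) this gives $\int w_k\,d\lambda\to\lambda(\A)$, so the dual value is $\lambda(\A)$ and there is no duality gap.

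Finally, for non-attainment, suppose the infimum were attained at feasible $(w,v^1,v^2)$ with $\int w\,d\lambda=\lambda(\A)$. Since $w\ge 0$ everywhere and $w\ge 1$ on $\A$ (Lemma \ref{LemmaPositivity}), the chain $\lambda(\A)=\int_\A w\,d\lambda+\int_{X\setminus\A}w\,d\lambda\ge\lambda(\A)$ must hold with equality, forcing $w=1$ $\lambda$-a.e. on $\A$ and $w=0$ $\lambda$-a.e. on $X\setminus\A$, i.e. $w=\mathbb{I}_\A$ $\lambda$-a.e. A continuous $w$ with $w\ge 1$ on $\A$ yet $w=0$ a.e. on $X\setminus\A$ cannot exist once $\lambda(X\setminus\A)>0$: taking Lebesgue-density points of $\{w=0\}$ that accumulate at a point of $\A$ (possible when $X=\overline{\mathrm{int}\,X}$ and $\A\subsetneq X$) contradicts continuity. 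Hence $\lambda(X\setminus\A)=0$, which for the sets at hand means $\A=X$. The main obstacle in the whole argument is the no-gap direction of the second and third paragraphs; everything rests on the single-sided approximation of $\mathbb{I}_{M_+}$ by smooth subsolutions of $\beta v-\nabla v\cdot f\ge 0$ — precisely where the technical (PDE/Lyapunov) construction of \cite{c10neu} enters — and the new content here is the clean way the forward and backward certificates are combined to capture $\A=M_+\cap M_-$.
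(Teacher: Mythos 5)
Your handling of the main content of Theorem~\ref{Thm} --- the value $\lambda(\A)$, absence of a duality gap, and the inclusion $\A \subset w^{-1}([1,\infty))$ --- is correct, and it takes a genuinely different route from the paper. The paper's proof is explicit only for $\beta > \mathrm{Lip}(f)$: it constructs a single pair $v^1(x) = -\int_0^\infty e^{-\beta t} p(\varphi_t(x))\,dt$ (and its time-reversed analogue) solving $\beta v^1 - \nabla v^1\cdot f = 0$ on $X$ with $v^1 < 0$ precisely off $M_+$, then scales to get $(m v^1, m v^2, \max\{0,\one + m v^1 + m v^2\})$ with $w_m \searrow \mathbb{I}_\A$ pointwise and concludes by monotone convergence; the general case $\beta>0$ is deferred to "applying the arguments from \cite{c10neu} twice." You instead take the one-sided minimizing sequences of \cite{c10neu} as a black box and glue them, controlling $\int_X w_k\,d\lambda$ via the decomposition into $\A$, $M_+\setminus M_-$, $M_-\setminus M_+$, $X\setminus(M_+\cup M_-)$ and subadditivity of the positive part. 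The estimates check out: $v^1_k\ge 0$ on $M_+$ by the one-sided version of Lemma~\ref{LemmaPositivity}, so $w^+_k = \one + v^1_k$ on $M_+$ and $\int_{M_+} v^1_k\,d\lambda \to 0$, and the cross terms vanish because $w^\pm_k \to \mathbb{I}_{M_\pm}$ in $L^1$. This yields the theorem for \emph{all} $\beta>0$ in one stroke, which is exactly what is claimed and which the paper's explicit construction does not cover. Your direct primal upper bound via $\supp(\mu_0)\subset M_+\cap M_- = \A$ and $\mu_0 \le \lambda\big|_X$ is also valid and is not in the paper (which pins the primal value through weak duality instead). What you lose relative to the paper's construction is the sharper by-product noted after its proof: for $\beta > \mathrm{Lip}(f)$ there is a feasible triple with $w^{-1}([1,\infty)) = \A$ exactly.

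The non-attainment paragraph, however, has a genuine gap. Your intermediate claim --- that a continuous $w$ with $w\ge 1$ on $\A$ and $w = 0$ $\lambda$-a.e.\ on $X\setminus\A$ cannot exist once $\lambda(X\setminus\A)>0$ --- is false as a purely topological statement: if $\A$ were a relatively clopen proper subset of $X$ (a union of connected components), say hypothetically $X = [0,1]\cup[2,3]$ with $\A = [0,1]$, then a continuous $w$ equal to $\one$ on $\A$ and $0$ on the other component exists, and with suitably scaled $v^1,v^2$ satisfying $v^1+v^2 \le -1$ there the infimum would be attained. Your density-point argument tacitly assumes that density points of $\{w=0\}$ accumulate at a point of $\A$, and that is precisely what fails when $\A$ is relatively clopen; the parenthetical hypotheses $X = \overline{\mathrm{int}\,X}$ and $\A\subsetneq X$ do not rule this out. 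The missing ingredient is the fact the paper invokes from \cite{c28neu}: $\A$ has non-empty intersection with every connected component of $X$, so $\A$ relatively clopen forces $\A = X$. With that fact your argument closes: attainment forces $w = \mathbb{I}_\A$ $\lambda$-a.e., continuity then forces $\A$ to be relatively open (hence clopen) in $X$, hence $\A = X$. Note also that some nondegeneracy of $X$ such as $X = \overline{\mathrm{int}\,X}$, which you at least make explicit while the paper leaves it implicit, is truly needed: if $\lambda(X) = 0$, then $(v^1,v^2,w) = (0,0,\one)$ attains the optimal value $0 = \lambda(\A)$ even when $\A \neq X$.
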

 
This statement shows that the global attractor can be approximated by superlevel sets of functions $w$ obtained from the dual problem and as the feasible solutions approach the optimum this approximation gets tight.\\
Before we prove the theorem we want to mention the following. Since the global attractor is given by the intersection of $M_+$ and $M_-$ set we can first apply the linear program from \cite{c10neu} to find $M_+$ and in the next step apply the same problem to the dynamical system with reversed time direction to find the part of $M_-$ laying in $M_+$. Since it was shown in \cite{c10neu} that there is no duality gap in each step there will be no duality gap for the LP given here. The arguments used in \cite{c10neu} use infinite-dimensional LP theory while we will give a constructive proof.\\
In the case of regularizing discount factor $\beta > \mathrm{Lip}(f)$ we will construct a sequence of feasible solutions $(v^1_m,v^2_m,w_m)_{m \in \N}$ such that $w_m \rightarrow \mathbb{I}_\A$ in $\mathrm{L}^1(X,\lambda)$; in particular this is a minimizing sequence by Lemma~\ref{PropLowerBound}.\\
Before proving Theorem~\ref{Thm},  let us state the following classical result 
\cite[Theorem 2.29]{c17neu}.
 
\begin{proposition}\label{PropSmoothDistance}
	For each closed set $C \subset \R^n$ there exists a bounded function $p\in \C^\infty(\R^n)$ such that $p^{-1}(\{0\}) = C$ and $p(x) \geq 0$ for all $x \in \R^n$.
\end{proposition}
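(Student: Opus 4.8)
\textbf{Proof proposal for Proposition~\ref{PropSmoothDistance}.}

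The plan is to build the function $p$ from the distance function to $C$ by smoothing it while preserving its zero set. The naive candidate $p_0(x) = \dist(x,C)$ already satisfies $p_0 \geq 0$ and $p_0^{-1}(\{0\}) = C$ (the latter because $C$ is closed, so points outside $C$ have strictly positive distance), but $p_0$ is only Lipschitz, not smooth. The difficulty is to smooth $p_0$ on the open set $\R^n \setminus C$ without disturbing its behaviour near $C$, since any smoothing that bleeds across the boundary could either create new zeros or destroy the vanishing on $C$. I would also replace $p_0$ by a bounded variant (for instance $\min(p_0,1)$ or $p_0/(1+p_0)$, or better a smoothed version thereof) so that the final $p$ is bounded as required.

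First I would set up a locally finite cover of the open set $U := \R^n \setminus C$ by balls whose closures avoid $C$, together with a subordinate smooth partition of unity $\{\chi_i\}$; on each such ball the distance function is bounded away from zero, so there is room to mollify. Concretely, for each $i$ I would convolve $p_0$ with a mollifier whose support radius is chosen small relative to $\dist(\supp\chi_i, C)$, producing a smooth local approximation $q_i$ that stays strictly positive on $\supp\chi_i$ and differs from $p_0$ by less than, say, half of the local value of $p_0$. Setting $q := \sum_i \chi_i q_i$ on $U$ yields a function that is $\C^\infty$ on $U$, strictly positive there, and comparable to $p_0$ (hence $q(x) \to 0$ as $x \to C$). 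Extending $q$ by $0$ on $C$ gives a candidate $p$ that is continuous everywhere, smooth on $U$, positive on $U$, and zero exactly on $C$.

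The main obstacle is verifying $\C^\infty$ smoothness across the boundary $\partial C$, i.e.\ that all partial derivatives of $p$ exist and vanish as one approaches $C$. The key step here is a quantitative decay estimate: by choosing the mollification radii and the approximation tolerances to shrink sufficiently fast as the cover balls approach $C$ (for example, tying the $i$-th tolerance to the distance $\dist(\supp\chi_i,C)$), one can force $p$ together with all its derivatives of every order to decay to $0$ faster than any power of $\dist(x,C)$ as $x \to C$. This ``flatness'' at $\partial C$ is exactly what is needed to conclude that the zero extension is $\C^\infty$, with all derivatives continuous and equal to $0$ on $C$. Since the construction is standard but delicate in exactly this smoothing-with-control step, in the paper I would simply invoke the cited reference \cite[Theorem 2.29]{c17neu} rather than reproduce the full argument; the sketch above indicates why the result holds and why the flatness estimate is the crux.
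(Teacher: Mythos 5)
There is a genuine gap, and it sits exactly where you locate the ``crux'': the flatness claim in your third paragraph is not only unproven, it contradicts a constraint you impose in your second paragraph. You require each local mollification $q_i$ to differ from $p_0 = \dist(\cdot,C)$ by less than half the local value of $p_0$, so your glued function satisfies $q \geq \tfrac{1}{2}\dist(\cdot,C)$ on $\R^n\setminus C$. Such a $q$ decays exactly linearly at $\partial C$, never ``faster than any power,'' and its zero extension cannot even be $\C^1$: if $p\in\C^1(\R^n)$ with $p\geq 0$ and $p=0$ on $C$, then every point of $C$ is a global minimum, so $\nabla p = 0$ on $C$ and, by continuity, $\nabla p \to 0$ near $C$; but applying the mean value theorem to the segment joining a point $x$ to a nearest point of $C$, the bound $p(x)\geq \tfrac{1}{2}\dist(x,C)$ produces points arbitrarily close to $C$ where a directional derivative of $p$ is at least $\tfrac{1}{2}$ in absolute value. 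Moreover, the quantitative behaviour of your construction goes the wrong way for higher derivatives: in a Whitney-type cover the partition functions satisfy $|\partial^\alpha \chi_i| \sim d_i^{-|\alpha|}$ with $d_i = \dist(\supp\chi_i,C)$, and mollifying a Lipschitz function at scale comparable to $d_i$ gives $|\partial^\alpha q_i| \lesssim d_i^{1-|\alpha|}$, so $|\partial^\alpha q(x)|$ scales like $\dist(x,C)^{1-|\alpha|}$, which \emph{blows up} for $|\alpha|\geq 2$. No choice of mollification radii or tolerances can repair this while $q$ is forced to track $p_0$ from below.

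The missing idea is to destroy the comparability with the distance function by composing with an exponentially flat profile. One correct distance-based route: first build a regularized distance $\Delta$, smooth on $\R^n\setminus C$, with $c_1\dist(x,C)\leq \Delta(x)\leq c_2\dist(x,C)$ and $|\partial^\alpha\Delta(x)|\leq C_\alpha \dist(x,C)^{1-|\alpha|}$ (this is where your cover-and-mollify machinery genuinely belongs), then set $p = h\circ\Delta$ with $h(t)=e^{-1/t}$ for $t>0$, $h(0)=0$, extended by $0$ on $C$; every derivative of $p$ is a finite sum of terms $e^{-1/\Delta}$ times negative powers of $\Delta$ and derivatives of $\Delta$, all of which tend to $0$ at $C$, giving the required smooth, bounded, nonnegative $p$ with $p^{-1}(\{0\})=C$. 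For comparison, the paper gives no proof at all and simply cites \cite[Theorem 2.29]{c17neu}, whose argument avoids the distance function entirely and is simpler than either route: cover $\R^n\setminus C$ by countably many balls $B_i$ with closures in the complement, choose smooth bumps $\psi_i\geq 0$ positive exactly on $B_i$, and set $p=\sum_i c_i\psi_i$ with $c_i>0$ small enough that $c_i\max_{|\alpha|\leq i}\|\partial^\alpha\psi_i\|_\infty\leq 2^{-i}$; then every differentiated series converges uniformly, so $p\in\C^\infty(\R^n)$ is bounded, nonnegative, and vanishes precisely on $C$. The lesson is that the flatness must come from the building blocks (bumps, or composition with $e^{-1/t}$), not from shrinking mollification radii applied to $\dist(\cdot,C)$ itself.
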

 
%Before we will proceed we need the following technical assumption
%\begin{equation}\label{EqnAssumption}
%	\text{\textbf{Assumption:} The boundary of } X \text{ has measure zero or } \A \cap \partial X = \emptyset.
%\end{equation}
%Both conditions are typically satisfied, especially the condition that $\lambda(\partial X) = 0$ is satisfied whenever $X$ is a smooth manifold (with corners) with boundary, as for example closed balls or cubes in $\R^n$.

\begin{proof} \textit{of Theorem \ref{Thm}.} We start with the easy part, namely that the superlevel sets $w^{-1}([1,\infty))$ give outer approximations of the global attractor $\A$. By Lemma \ref{LemmaPositivity} we have $w(x) \geq 1$ on $\A$. This also shows that the infimum can only be attained if $\mathbb{I}_\A$ is continuous on $X$ which is the case if and only if $\A$ is one connected component of $X$. But since $\A$ has non-empty intersection with every connected component of $X$ (see \cite{c28neu}) it follows that $\A = X$. And in that case we can choose $w = 1$ everywhere and $v^1 = v^2 = 0$.\\
For the rest, we only cover the case $\beta > \mathrm{Lip}(f)$. We need this technical assumption in order to guarantee that our construction gives a sufficient regular function (namely $\C^1$). Note that the general case is covered by applying the arguments from \cite{c10neu} twice, once forward in time and once in reversed time direction.\\
The idea is that we will define suitable functions $v^1_m,v^2_m$ that satisfy the equation $\beta v_m^1 - \nabla v^1_m f \geq 0$ and $\beta v^2 + \nabla v^2 \cdot f\geq 0$ respectively and build a minimizing sequence based on those functions.\\
Let us start with a technical construction motivated by a similar construction in~\cite{c12neu}. We want to recognize which points leave $X$. Without loss of generality we can assume $f$ being globally Lipschitz with globally  Lipschitz derivative since, under our assumptions, one can always modify $f$ outside of $X$ in this fashion. This does not affect the result because the global attractor is only determined by the dynamics in $X$. We denote the corresponding flow also by $\varphi$. We note that this flow exists for all $(t,x)\in\R^{n+1}$ and we have $x \notin M_+$ if and only if $\varphi_t(x) \notin X$ for some $t \in \R_+$. Let us choose $p \in \C^\infty(\R^n)$ bounded such that $p^{-1}(\{0\}) = X$ and $p>0$ everywhere else. For $x \in \R^n$ define
\vglue-5mm
\begin{equation}\label{EqnDefVn}
	v^1(x):= - \int\limits_0^\infty e^{-\beta t} p(\varphi_t(x))\; dt.
\end{equation}
Then $v^1(x) < 0$ if and only if there exists a time $t \in \R_+$ for which we have $\varphi_t(x) \notin X$. In particular $v^1 < 0$ on $X \setminus M_+$. We will see that $v^1$ satisfies $\beta v^1 - \nabla v^1 \cdot f \geq 0$ on $X$. We have $\|\partial_{x}\varphi_t(x)\| \leq \overline{M} e^{\mathrm{Lip}(f)t}$ for some $\overline{M} >0$ and all $t \in \R_+$. Thanks to $\beta > \mathrm{Lip}(f)$ we can interchange integration and differentiation and get for all $x \in \R^n$
\begin{equation*}
	Dv^1(x) = -\int\limits_0^\infty e^{-\beta t} \partial_x\left( p(\varphi_t(x))\right) \; dt =  - \int\limits_0^\infty e^{-\beta t} Dp(\varphi_t(x)) \partial_x \varphi_t(x) \; dt.
\end{equation*}
Further
\begin{eqnarray*}
	\beta v^1 (x) & = & -\beta \int\limits_0^\infty e^{-\beta t} p(\varphi_t(x)) \; dt \overset{p.i.}{=} -p(x) + \int\limits_0^\infty e^{-\beta t} Dp(\varphi_t(x)) f(\overline{\varphi}_t(x)) \; dt\\
		 & = & -p(x) - \int\limits_0^\infty e^{-\beta t} Dp(\varphi_t(x)) \partial_{x} \varphi_t(x) f(x) \; dt = -p(x) + Dv^1(x) f(x).
\end{eqnarray*}
where we have used in the third line the following relation
\begin{equation*}
	\partial_{x_0} \varphi_t(x_0) \cdot f(x_0) = \partial_t \varphi_t(x_0) = f(\varphi_t(x_0)).
\end{equation*}
This relation is the only part where we need $f$ to have a locally Lipschitz continuous derivative.\\
Since $p$ is vanishing on $X$ we have $\beta v^1 - \nabla v^1 \cdot f = 0$ on $X$ and $v^1 (x) < 0$ for $x \notin M_+$. Proceeding similarly backward in time we find $v^2 \in \C^1(\R^n)$ that satisfies $\beta v^2 + \nabla v^2 \cdot f \geq 0$ on $X$ and $v^2 (x) < 0$ for $x \notin M_-$. In particular the triple $(v^1_m,v^2_m,w_m):=(m\cdot v^1,m \cdot v^2,\max\{0,1+m\cdot v^1+m\cdot v^2\})$ is feasible and as $m \rightarrow \infty$ we have $w_m \searrow \mathbb{I}_\A$. It follows from the monotone convergence theorem that $\int\limits_X w_m \; d\lambda \rightarrow \int\limits_X \mathbb{I}_\A \; d\lambda = \lambda(\A)$. By Proposition \ref{PropLowerBound} we know that $\lambda(\A)$ is a lower bound for the primal problem while the above shows that $\lambda(\A)$ is an upper bound of the dual problem. Weak duality gives that $\lambda(\A)$ is the optimal value for both the primal and dual LP.
\end{proof}

Note that for $\beta > \mathrm{Lip}(f)$ we have constructed a feasible solution $(v^1,v^2,w)$ such that $w^{-1}([1,\infty) ) = w^{-1}(\{1\}) = \A$.

\section{Discrete time}
In this section, we consider discrete-time systems of the form
\[
x^+ = f(x).
\]

The main difference between continuous and discrete time is that continuous time systems that are induced by ordinary differential equations with locally Lipschitz right-hand side enjoy unique solutions forward and backward in time, which corresponds to injectivity of the flow functions $\varphi_t$. Discrete-time systems do not have this property in general; these systems are also well defined for maps $f:X \rightarrow \R^n$ that are not injective. The problem with our approach occurs when we want to invert time; for non-injective systems, it means that we may have multiple predecessors. Before we state the analog result of Theorem \ref{Thm} we will define the MNI set $M_-$ for discrete time systems and proceed analogously as in the continuous time. The definition of the MPI set $M_+$ does not change because the problem of multiple successors does not occur. We will give an analog LP that gives outer bounds for the global attractor for discrete systems. If $f$ is injective those outer bounds get sharp. From now on we will always refer to a dynamical system of the form $(X,(f^m)_{m \in \N})$ for a compact subset $X \subset \R^n$ and a continuous map $f:\R^n \rightarrow \R^n$, where $f^{0} := \mathrm{Id}$ and $f^{m+1} := f \circ f^m$ for $m \in \N$.
 
\begin{definition}[Global attractor]
	A global attractor for the discrete time dynamical system $(X,(f^m)_{m \in \N})$ is a minimal compact set $\A$ such that
	\[
	 \lim_{m\to\infty} \dist(f^m(M_+),\A)  = 0.
	 \]
\end{definition}
 
As for continuous time dynamical systems global attractors are unique,  $f(\A) = \A$ and they enjoy many interesting properties. We refer to \cite{c28neu} for such properties and the following result.
 
\begin{theorem}[Existence of the global attractor]\label{ThmExistenceGlobAttractor}
	If $X$ is compact the global attractor exists.
\end{theorem}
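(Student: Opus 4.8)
The plan is to realize $\A$ as the $\omega$-limit set of the maximal positively invariant set $M_+$, which is the standard construction of a global attractor for a dissipative map on a compact space. First I would record the two structural facts that drive everything. Writing $M_+ = X \cap \bigcap_{m \in \N}(f^m)^{-1}(X)$, continuity of each $f^m$ and compactness (hence closedness) of $X$ show that $M_+$ is a closed subset of the compact set $X$, so $M_+$ is compact. It is also positively invariant, $f(M_+) \subseteq M_+$, directly from the definition: if $f^m(x) \in X$ for all $m$, then $f^m(f(x)) = f^{m+1}(x) \in X$ for all $m$. Consequently $f^m(M_+) \subseteq M_+$ for every $m$, so the entire forward orbit of $M_+$ is trapped in the compact set $M_+$.

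With this in hand I would set
\[
\A := \bigcap_{N \in \N} \overline{\bigcup_{m \geq N} f^m(M_+)},
\]
and verify it meets the definition. (If $M_+ = \emptyset$ the statement is vacuous with $\A = \emptyset$, so I assume $M_+ \neq \emptyset$.) Nonemptiness and compactness follow from Cantor's intersection theorem, since the sets $F_N := \overline{\bigcup_{m \geq N} f^m(M_+)}$ are nonempty closed subsets of the compact set $M_+$, hence compact, and are nested decreasing. The invariance $f(\A) = \A$ I would prove through the sequential description of $\A$ as the set of limits $y = \lim_k f^{m_k}(x_k)$ with $m_k \to \infty$ and $x_k \in M_+$. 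The inclusion $f(\A) \subseteq \A$ is a one-line continuity argument: $f^{m_k}(x_k) \to y$ gives $f^{m_k+1}(x_k) \to f(y)$. For $\A \subseteq f(\A)$, given such a $y$ I would pass to the predecessors $z_k := f^{m_k-1}(x_k) \in M_+$, extract a convergent subsequence $z_k \to z \in \A$ by compactness of $M_+$, and conclude $f(z) = \lim_k f^{m_k}(x_k) = y$ by continuity.

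The attraction property $\dist(f^m(M_+),\A) \to 0$ and the minimality of $\A$ both follow by the same compactness-and-contradiction scheme. If attraction failed there would be $\varepsilon > 0$, $m_k \to \infty$ and $y_k = f^{m_k}(x_k)$ with $\dist(y_k,\A) \geq \varepsilon$; since $y_k \in M_+$ I could extract $y_k \to y$, which would force $y \in \A$ yet $\dist(y,\A) \geq \varepsilon$, a contradiction. For minimality, if $\A'$ is any compact set attracting $M_+$, then every $y \in \A$ is a limit $f^{m_k}(x_k) \to y$ with $\dist(f^{m_k}(x_k),\A') \to 0$, so $\dist(y,\A') = 0$ and $y \in \A'$ because $\A'$ is closed; hence $\A \subseteq \A'$. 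Together these show $\A$ is the smallest compact uniformly attracting set.

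The step I expect to require the most care is the backward inclusion $\A \subseteq f(\A)$, precisely because in discrete time $f$ need not be injective: a point of $\A$ may have several predecessors and one cannot invert $f$ pointwise. The $\omega$-limit formulation sidesteps this by manufacturing a predecessor as a \emph{limit} of genuine predecessors along the approximating orbits, trading invertibility for compactness of $M_+$ — this is exactly where the hypothesis that $X$ is compact does the real work. I would stress that, in contrast to the sharpness of the later LP-based outer approximations, existence of $\A$ itself needs no injectivity assumption on $f$.
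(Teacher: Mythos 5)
Your proof is correct, and it is more than the paper provides: for this theorem the paper gives no argument at all, deferring entirely to the citation of Robinson's book \cite{c28neu}. What you have written is the classical $\omega$-limit construction $\A = \bigcap_{N}\overline{\bigcup_{m\ge N} f^m(M_+)}$, which is essentially the proof found in that reference (there stated for semigroups with a compact absorbing set; here $M_+$ plays that role), so in substance you have reconstructed the argument the paper outsources. All the steps check out: compactness and positive invariance of $M_+$, nonemptiness of $\A$ via nested compact sets (with the degenerate case $M_+ = \emptyset$ correctly set aside), both inclusions of $f(\A) = \A$, attraction and minimality by the compactness-and-subsequence scheme. You are also right to flag the backward inclusion $\A \subseteq f(\A)$ as the delicate point: manufacturing a predecessor as a limit of the points $f^{m_k-1}(x_k)$ is exactly how one avoids inverting a possibly non-injective $f$, and your closing remark correctly separates this from the injectivity hypothesis that the paper later needs only for sharpness of the LP bounds (Theorem \ref{TheoremLPOptimalValueDiscrete}). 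One small point worth adding: the paper's notational convention declares $\dist(K_1,K_2)$ to be the \emph{Hausdorff} distance, whereas your contradiction argument establishes only the one-sided convergence $\sup_{y \in f^m(M_+)}\dist(y,\A) \to 0$; the other half is free, since $\A \subseteq M_+$ and $f(\A)=\A$ give $\A = f^m(\A) \subseteq f^m(M_+)$, so the distance from each point of $\A$ to $f^m(M_+)$ is zero --- a one-line remark that your invariance step already supplies, but that should be stated to match the paper's definition of attraction exactly.
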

 
\begin{definition} For a discrete time dynamical system $(X,(f^m)_{m \in \N})$ we define the maximal negatively invariant set as	$M_-:= \{ x \in X: \forall m \in \N \; \exists x_m  \in X \text{ with } f^m(x_m) = x\}$.
\end{definition}
 
\begin{remark} $M_-$ is given by $\bigcap\limits_{m \in \N} f^m(X)$ and hence is closed since $f$ is continuous and $X$ compact.
\end{remark}
Next, we show that the global attractor in discrete time is also given by all the points that stay in $X$ for all positive and negative times.
 
\begin{proposition}\label{PropGlobAttractorMaxInvSetDiscrete}
	The global attractor for the discrete time dynamical system $(X,(f^m)_{m \in \N})$ is given by $M_+ \cap M_-$.
\end{proposition}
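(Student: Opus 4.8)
The plan is to prove the two inclusions $\A \subseteq M_+ \cap M_-$ and $M_+ \cap M_- \subseteq \A$ separately, the first being routine and the second carrying the whole difficulty. For $\A \subseteq M_+ \cap M_-$ I would use the invariance $f(\A)=\A$ recalled just before Theorem~\ref{ThmExistenceGlobAttractor}. Iterating gives $f^m(\A)=\A\subseteq X$ for every $m$, so each point of $\A$ has its entire forward orbit in $X$, i.e.\ $\A\subseteq M_+$. Likewise $\A=f^m(\A)\subseteq f^m(X)$ for all $m$, together with $\A\subseteq X$, yields $\A\subseteq\bigcap_m f^m(X)=M_-$ by the Remark characterising $M_-$. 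Hence $\A\subseteq M_+\cap M_-$.

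For the reverse inclusion I would first obtain a convenient description of $\A$ itself. The set $M_+$ is compact (it is closed and contained in $X$) and satisfies $f(M_+)\subseteq M_+$, so the images $f^m(M_+)$ form a nested decreasing sequence of compacta and $\Omega:=\bigcap_m f^m(M_+)$ is a nonempty compact set with $f(\Omega)=\Omega$ and $\dist(f^m(M_+),\Omega)\to 0$. Thus $\Omega$ is an attracting compact set; and if $A'$ is \emph{any} attracting compact set then every $x\in\Omega$ lies in $f^m(M_+)$ for all $m$, so $\dist(x,A')\le \dist(f^m(M_+),A')\to 0$ and $x\in A'$. Therefore $\Omega$ is contained in every attracting compact set, and by minimality $\Omega=\A$. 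It then suffices to prove $M_+\cap M_-\subseteq\Omega$, i.e.\ that every $x\in M_+\cap M_-$ admits, for each $m$, an $m$-step preimage lying in $M_+$.

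The heart of the argument is to convert the hypothesis $x\in M_-$ into a genuine backward orbit that stays inside $X$. Concretely, I would aim to construct points $x=y_0,y_1,y_2,\dots\in X$ with $f(y_{k+1})=y_k$. Once such a backward orbit is available, each $y_m$ belongs to $M_+$: its forward orbit first runs through $y_{m-1},\dots,y_0=x\in X$ and then follows the forward orbit of $x$, which remains in $X$ because $x\in M_+$. Since $f^m(y_m)=x$, this gives $x\in f^m(M_+)$ for every $m$, hence $x\in\Omega=\A$, as desired. To obtain the infinite backward orbit I would form, for each $m$, the compact set of length-$m$ backward orbits in $X$ ending at $x$ and pass to the inverse limit along the truncation maps (a König's lemma / nested-compactness argument), which produces an infinite branch as soon as every finite stage is nonempty.

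The nonemptiness of each finite stage is exactly where I expect the main obstacle, and it is the precise point at which injectivity of $f$ enters, matching the ``multiple predecessors'' phenomenon flagged at the start of this section. The bare condition $x\in M_-=\bigcap_m f^m(X)$ only supplies, for each $m$, a \emph{single} witness $w_m\in X$ with $f^m(w_m)=x$; the intermediate iterates $f(w_m),\dots,f^{m-1}(w_m)$ need not lie in $X$, so a length-$m$ backward orbit inside $X$ need not exist. When $f$ is injective this difficulty vanishes: a point has at most one preimage, and one checks by induction that the unique backward orbit of any $x\in M_-$ stays in $X$ and again lies in $M_-$ at each step (for $x\in f^{k+1}(X)$ forces its unique preimage into $f^k(X)\cap X$). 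Then every finite stage is nonempty, the inverse-limit construction succeeds, and $M_+\cap M_-\subseteq\A$ follows, giving equality. This is consistent with the earlier observation that the bounds become sharp precisely when $f$ is injective, and it isolates the backward-orbit existence as the single step on which the equality $\A=M_+\cap M_-$ truly rests.
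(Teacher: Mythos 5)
Your architecture is genuinely different from the paper's and, in one important respect, more careful. The paper's own proof is four lines: $f(\A)=\A$ gives $\A\subseteq M_+\cap M_-$; then it asserts $f(M_+\cap M_-)=M_+\cap M_-$ and concludes by minimality. The surjectivity half of that assertion is dispatched with ``obviously $x=f(x_1)$,'' where $x_1\in X$ is the order-one witness --- but it is never checked that $x_1\in M_-$ (its membership in $M_+$ is the easy half). That unverified step is exactly the obstacle you isolate: under the paper's definition $M_-=\bigcap_m f^m(X)$, the witness $x_m$ may have intermediate iterates outside $X$, so neither a length-$m$ backward chain inside $X$ nor $x_1\in M_-$ follows. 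Your positive ingredients --- the identification $\A=\bigcap_m f^m(M_+)$ via nested compacta and minimality, the reduction to existence of a full backward orbit inside $X$ (whose tail points then lie in $M_+$), and the K\"onig/inverse-limit step once every finite stage is nonempty --- are all correct, and they make explicit a dependence that the paper's proof leaves hidden; the paper instead appeals directly to the definition of the attractor after the (gapped) one-step invariance claim.

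However, your injectivity patch is itself gapped as phrased. ``A point has at most one preimage'' is \emph{global} injectivity of $f$ on $\R^n$; the hypothesis the paper later imposes in Theorem~\ref{TheoremLPOptimalValueDiscrete} is injectivity on $X$ only, with $f$ arbitrary off $X$. With injectivity only on $X$, your induction step fails at the same place as before: from $x\in f^{k+1}(X)$ you get $z\in X$ with $f^{k+1}(z)=x$, and $y:=f^k(z)$ is \emph{a} preimage of $x$, but unless $y\in X$ you cannot invoke uniqueness-within-$X$ to conclude $y=x_1\in f^k(X)\cap X$; nothing forces $y\in X$. This is not cosmetic: one can build $f$ continuous on $\R$ and injective on a compact $X$ in which a point $x$ sits on a forward orbit converging to a fixed point $p$, its unique in-$X$ preimage $b$ has no in-$X$ preimages at all, and for each $m\ge 2$ the order-$m$ witnesses reach $x$ through chains lying outside $X$; then $x\in M_+\cap M_-$ but $\A=\{p\}\not\ni x$, so with the paper's literal $M_-$ the reverse inclusion genuinely needs more than on-$X$ injectivity (global injectivity does suffice, since then the unique backward chain is forced into $X$ step by step, as you intended). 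The clean repair, consistent with your own analysis, is to read $M_-$ as $\bigcap_m \bigl(f|_X\bigr)^m(X)$, i.e.\ to demand that the backward chains stay in $X$: under that reading every finite stage of your inverse limit is nonempty by definition, the shifted backward orbit shows $x_1\in M_+\cap M_-$ (restoring the paper's invariance claim), and your K\"onig argument closes the proof with \emph{no} injectivity assumption at all --- both stronger and simpler than the injectivity route.
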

 
\begin{proof} By Theorem \ref{ThmExistenceGlobAttractor} the global attractor exists. Let $\A$ be the global attractor. The condition $f(\A) = \A$ implies that $\A \subset M_+ \cap M_-$. On the other hand the set $M_+ \cap M_-$ satisfies $f(M_+ \cap M_-) = M_+ \cap M_-$. This can checked as follows. Let $x \in M_- \cap M_+$. Then the orbit of $f(x)$ is contained in the orbit of $x$; hence $f(x) \in M_+$ and for a sequence of points $x_m$ such that $f^m(x_m) = x$ we have $f^m(x_{m-1}) = f(x)$ for all $m \in \N$ with $x_0 := x$, hence also $f(x) \in M_-$. On the other hand obviously $x = f(x_1)$. Which shows $f(M_+ \cap M_-) = M_+ \cap M_-$. Hence $\A \supset M_+ \cap M_-$ by definition of the global attractor.
\end{proof}

The occupation measure with discount factor $\alpha \in [0,1)$ is defined by
\begin{equation}\label{DefOccMeasureDiscrete}
	\mu(C) := \int\limits_X\sum\limits_{m = 0}^\infty \alpha^m \mathbb{I}_{C}(f^m(x)) \; d\mu_0(x),
\end{equation}
where the discount factor $\alpha \in [0,1)$ plays the role of $e^{-\beta}$ in continuous time. And in an analogous way to the continuous time case we get a primal LP for the MPI set $M_+$ (see \cite{c10neu} problem (6))
\begin{align*}
		\sup \; \; & \mu_0(X) = \sup \; \; \int\limits_X \one \; d\mu_0\\
		\text{s.t.}\; \;  & \mu_0,\hat{\mu}_0,\mu \in M(X)\\
			 & \int\limits_X v - \alpha v \circ f \; d\mu = \int\limits_X v \; d\mu_0 & \forall v \in \C(\R^n) \notag \\
			 & \mu_0 + \hat{\mu}_0 = \lambda\big|_X.
\end{align*}
In the continuous time case, we reversed time by making the vector field point in the reverse direction. In discrete time we reverse time by considering the map $f^{-1}$ but this might be multivalued. In order to reverse the time direction we switch the position of $f$ in the discrete Liouville equation
\begin{align}\label{OptimizationDiscreteProbApPrimal}
		p^*:= \sup \; \; & \mu_0(X) = \sup \; \; \int\limits_X \one \; d\mu_0 \notag \\
		\text{s.t.}\; \;  & \mu_0,\hat{\mu}_0,\mu_+,\mu_- \in M(X) \notag \\
			 & \int\limits_X v - \alpha v \circ f \; d\mu_+ = \int\limits_X v \; d\mu_0 & \forall v \in \C(\R^n) \notag \\
			 & \int\limits_X v \circ f - \alpha v \; d\mu_- = \int\limits_X v \; d\mu_0 & \forall v \in \C(\R^n) \notag \\
			 & \mu_0 + \hat{\mu}_0 = \lambda\big|_X.
\end{align}

This gets clearer in the dual program that is given by
%\begin{equation}\label{OptimizationDiscreteProbAp0}
%	\begin{tabular}{llc}
%		$\inf$ & $\int\limits_X w \; d\lambda$&\\
%		s.t. & $v^1,v^2w \in \C(X)$ &\\
%			 & $-v^1-v^2 + w \geq 1$& \\
%			 & $w \geq 0$ &\\
%			 & $ v^1 - \alpha v^1 \circ f \geq 0$ &\\
%			 & $ v^2 \circ f - \alpha v^2 \geq 0$ & 
%	\end{tabular}
%\end{equation}
\vglue-10mm
\begin{align}\label{OptimizationDiscreteProbAp0}
		d^*:= \inf \; \; & \int\limits w \; d\lambda\\
		\text{s.t.}\; \;  & v^1,v^2w \in \C(\R^n)\notag\\
			 & -v^1-v^2 + w \geq \one\notag\\
			 & w \geq 0\label{DiscFeasWPositive}\\
			 & v^1 - \alpha v^1 \circ f \geq 0\label{DiscreteLiouvilleInequality}\\
			 & v^2 \circ f - \alpha v^2 \geq 0 \label{DiscLiouvSwitchedF}
\end{align}
If $f$ is invertible on $X$ then the reversed time equation would read $v^2(y) - \alpha v^2(f^{-1}(y)) \geq 0$ and the substitution $x = f^{-1}(y)$, i.e. $y = f(x)$, gives $v^2(f(x)) - \alpha v^2(x) \geq 0$. But this is exactly the term in the last line.\\
Again we start with stating a lower bound.
 
\begin{lemma}\label{LemmaLowerBound}
	Each feasible solution $(v^1,v^2,w)$ satisfies $v^1 \geq 0$ on $M_+$ and $v^2 \geq 0$ on $M_-$ and $w \geq \one$ on $\A$. In particular $w^{-1}([1,\infty)) \supset \A$ and $d^* \geq \lambda(\A)$.
\end{lemma}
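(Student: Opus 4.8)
The plan is to mirror the continuous-time argument of Lemma \ref{LemmaPositivity}, replacing the Gronwall estimate along the flow by a direct iteration of the discounted inequalities along discrete orbits. Throughout I use that $v^1,v^2,w$ are continuous, hence bounded on the compact set $X$, and that $\alpha \in [0,1)$ so that $\alpha^m \to 0$.

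First I would prove $v^1 \geq 0$ on $M_+$. The feasibility constraint (\ref{DiscreteLiouvilleInequality}) reads $v^1(x) \geq \alpha\, v^1(f(x))$ for $x \in X$. For $x \in M_+$ the whole forward orbit $f^m(x)$ stays in $X$, so I may apply this inequality at $x, f(x), \dots, f^{m-1}(x)$ and chain the estimates to obtain $v^1(x) \geq \alpha^m v^1(f^m(x))$ for every $m \in \N$. Since $|v^1(f^m(x))| \leq \sup_X |v^1| < \infty$ and $\alpha^m \to 0$, the right-hand side tends to $0$, giving $v^1(x) \geq 0$.

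Next, $v^2 \geq 0$ on $M_-$ is the genuinely discrete-time part. The switched constraint (\ref{DiscLiouvSwitchedF}) reads $v^2(f(y)) \geq \alpha\, v^2(y)$ for $y \in X$, so I want to iterate it along a backward orbit of $x$. For $x \in M_-$ I would select a backward orbit $x = y_0, y_1, y_2, \dots$ lying in $X$ with $f(y_{k+1}) = y_k$; chaining (\ref{DiscLiouvSwitchedF}) along $y_{m-1}, \dots, y_0$ yields $v^2(x) \geq \alpha^m v^2(y_m) \geq -\alpha^m \sup_X|v^2|$, and letting $m \to \infty$ gives $v^2(x) \geq 0$. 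Finally, on $\A = M_+ \cap M_-$ (Proposition \ref{PropGlobAttractorMaxInvSetDiscrete}) both $v^1 \geq 0$ and $v^2 \geq 0$, so the constraint $-v^1-v^2+w \geq \one$ forces $w \geq 1 + v^1 + v^2 \geq 1$ on $\A$; this is exactly $w^{-1}([1,\infty)) \supset \A$. For the mass bound I combine $w \geq 0$ on $X$ (constraint (\ref{DiscFeasWPositive})) with $w \geq 1$ on $\A$ to get the pointwise inequality $w \geq \mathbb{I}_\A$ on $X$; integrating against $\lambda$ gives $\int_X w\,d\lambda \geq \lambda(\A)$, and taking the infimum over feasible triples yields $d^* \geq \lambda(\A)$.

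The main obstacle is the backward-orbit step. Unlike the invertible flow $\varphi_t$, a non-injective $f$ may send a point of $M_-$ to several (or awkwardly located) predecessors, and the definition of $M_-$ only guarantees, for each $m$, some preimage $x_m \in X$ with $f^m(x_m) = x$ — the intermediate iterates $f^k(x_m)$ need not lie in $X$, so the naive chain of (\ref{DiscLiouvSwitchedF}) along such a preimage can leave the set on which the constraint is valid. The key point to nail down is therefore the existence of a full backward orbit of $x$ remaining inside $X$, on which every link of the chain is legitimate; this I expect to obtain from the identity $M_- = \bigcap_{m} f^m(X)$ together with a compactness argument (equivalently, from $f(\A) = \A$ when one only needs the conclusion on $\A$, which already suffices for $w^{-1}([1,\infty)) \supset \A$ and $d^* \geq \lambda(\A)$). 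This is precisely the time-reversal difficulty that distinguishes the discrete case from the continuous one.
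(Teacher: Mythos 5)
Your plan coincides with the paper's proof in three of its four steps: the forward chain $v^1(x) \geq \alpha v^1(f(x)) \geq \dots \geq \alpha^m v^1(f^m(x)) \to 0$ on $M_+$, the combination $w \geq \one + v^1 + v^2 \geq \one$ on $\A = M_+ \cap M_-$, and the integration of $w \geq \mathbb{I}_\A$ against $\lambda$ are exactly what the paper does. The genuinely different step is the backward one, and there your plan has a gap. The paper does \emph{not} construct an infinite backward orbit at all: for $x \in M_-$ it takes, for each fixed $m$, the $m$-step preimage $x_m \in X$ furnished by the definition of $M_-$ and chains the switched inequality (\ref{DiscLiouvSwitchedF}) \emph{forward} along the finite string $x_m, f(x_m), \dots, f^{m-1}(x_m)$, giving $v^2(x) = v^2(f^m(x_m)) \geq \alpha^m v^2(x_m) \geq -\alpha^m \sup_X |v^2| \to 0$; only the endpoint $x_m \in X$ is needed for the uniform bound, and under the formulation of (\ref{OptimizationDiscreteProbAp0}), where $v^1,v^2,w \in \C(\R^n)$, the intermediate links of the chain are legitimate whether or not the iterates $f^k(x_m)$ lie in $X$. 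This finite-chain device sidesteps your obstacle entirely.

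The concrete gap: the full backward orbit you hope to extract ``from $M_- = \bigcap_m f^m(X)$ together with a compactness argument'' need not exist, even for $f$ injective on $X$. Take $X = \{0,2\} \subset \R$ and $f$ continuous with $f(0)=2$, $f(2)=3$, $f(3)=2$. Then $2 \in M_-$ (choose $x_{2k}=2$ and $x_{2k+1}=0$, since $f^2(2)=2$ and $f(0)=2$), but the only $X$-predecessor of $2$ is $0$ and $0$ has no $X$-predecessor, so no infinite backward orbit of $2$ remains in $X$: every long preimage chain passes through $3 \notin X$. The same example shows the claim ``$v^2 \geq 0$ on $M_-$'' is sensitive to where (\ref{DiscLiouvSwitchedF}) is imposed: if one requires it only on $X$, the choice $v^2(0)=-1$, $v^2(2)=-\alpha/2$, $v^2(3)=0$ is admissible yet negative at $2 \in M_-$, so for this part of the statement the inequality must be used beyond $X$, exactly as the paper's finite forward chain does. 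Your fallback is nonetheless sound and worth keeping: on $\A$ the invariance $f(\A)=\A$ does give a backward orbit inside $\A \subset X$, your chain then proves $v^2 \geq 0$ on $\A$, and this already yields $w \geq \one$ on $\A$, $w^{-1}([1,\infty)) \supset \A$ and $d^* \geq \lambda(\A)$ --- everything in the lemma except the literal claim ``$v^2 \geq 0$ on $M_-$''. To prove that claim as stated, replace your backward-orbit construction by the paper's finite forward chains from the $m$-step preimages.
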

 
\begin{proof} For feasible $(v^1,v^2,w)$ we have for each $x \in M_+$ that $v^1(x) \geq \alpha v^1(f(x)) \geq \ldots \geq \alpha^m v^1(f^m(x)) \rightarrow 0$ as $m \rightarrow \infty$ and for each $x \in M_-$ with $x_m \in X$ such that $f^m(x_m) = x$ that $v^2(x) = v^2(f^m(x_m)) \geq \alpha^m v^2(x_m) \rightarrow 0$ as $m \rightarrow \infty$. Hence for $w$ we get $w \geq \one+v^1+v^2 \geq \one$ on $\A = M_+ \cap M_-$ and $\int\limits_X w \; d\lambda \geq \lambda(\A)$ since $w \geq 0$ everywhere on $X$ by (\ref{DiscFeasWPositive}).
\end{proof}

Next, we solve the discrete dual Liouville equation with a right-hand side.
 
\begin{lemma}\label{LemmaDiscreteLiouville}
	Let $\alpha \in [0,1)$. Let $p\in \C(R^n)$ be bounded and $F:\R^n \rightarrow \R^n$ be continuous
	. Then
	\begin{equation}\label{EqnConstructV1}
		v:\R^n \rightarrow \R, \;  v(x) := \sum\limits_{k = 0}^\infty \alpha^k p(F^k(x))
	\end{equation}
	is continuous and solves
	\begin{equation}\label{EqnDiscLiouville}
		v - \alpha v\circ F = p.
	\end{equation}
\end{lemma}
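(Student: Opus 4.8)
The plan is to separate the statement into its two independent claims---that the series defining $v$ produces a continuous function, and that this $v$ satisfies the functional equation $v - \alpha v \circ F = p$---and to observe that essentially all of the (mild) work lies in establishing convergence of the series in a strong enough sense to justify both. The functional equation itself is a purely formal telescoping identity once convergence is secured.

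First I would establish uniform convergence. Since $p$ is bounded, fix $M$ with $|p(x)| \le M$ for all $x \in \R^n$. Then each summand obeys $|\alpha^k p(F^k(x))| \le M\alpha^k$, and because $\alpha \in [0,1)$ the majorizing series $\sum_{k=0}^\infty M\alpha^k = M/(1-\alpha)$ is finite. By the Weierstrass $M$-test the series in (\ref{EqnConstructV1}) converges absolutely and uniformly on all of $\R^n$, so $v$ is well defined (and in fact bounded by $M/(1-\alpha)$). Continuity of $v$ then follows immediately: each partial sum $v_N := \sum_{k=0}^N \alpha^k\, p \circ F^k$ is continuous, being a finite sum of compositions $p \circ F^k$ of continuous maps (here $F^k$ is continuous because $F$ is and composition preserves continuity), and a uniform limit of continuous functions is continuous. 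This is the one place where the hypotheses $\alpha < 1$ and boundedness of $p$ are genuinely used, and it is the only step requiring any care; everything else is algebra.

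Finally I would verify (\ref{EqnDiscLiouville}) by a direct telescoping computation. Using the uniform convergence to manipulate the series freely,
\[
v(x) - \alpha\, v(F(x)) = \sum_{k=0}^\infty \alpha^k p(F^k(x)) - \sum_{k=0}^\infty \alpha^{k+1} p(F^{k+1}(x)) = p(x),
\]
where the reindexing $j = k+1$ in the second sum cancels every term of the first sum except the $k=0$ term $p(F^0(x)) = p(x)$. The main ``obstacle,'' such as it is, is purely the justification that the limit function inherits continuity from the partial sums, which the uniform bound above supplies; I do not expect any substantive difficulty beyond correctly invoking the $M$-test.
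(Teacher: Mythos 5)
Your proof is correct and matches the paper's intent: the paper merely remarks that the proof is ``the discrete analogue of the calculations from the proof of Theorem~\ref{Thm}'', and your Weierstrass $M$-test argument for uniform convergence and continuity, followed by the telescoping verification of $v - \alpha v \circ F = p$, is precisely that discrete analogue spelled out in full.
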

 
The proof is just the discrete analog of the calculations from the proof of Theorem \ref{Thm}.\\
Now we have everything we need to state the main theorem in the discrete case. But we need an additional technical condition on $f$ in order to reverse time direction.
 
\begin{theorem}\label{TheoremLPOptimalValueDiscrete}
	Let $X \subset \R^n$ be compact and $f:R^n \rightarrow \R^n$ be continuous and injective on $X$. Then for all $\alpha \in (0,1)$ the optimal values $p^*$ and $d^*$ of the above LPs are given by $\lambda(\A)$. For each feasible solution $(v^1,v^2,w)$ of the dual problem we have $\A \subset w^{-1}([1,\infty))$ and $\lambda(w_m^{-1}([1,\infty)) \rightarrow \lambda(\A)$ for any minimizing sequence $(v^1_m,v^2_m,w_m)$ of feasible solutions.
\end{theorem}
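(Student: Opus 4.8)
\section*{Proof proposal for Theorem \ref{TheoremLPOptimalValueDiscrete}}

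The plan is to pin down $p^* = d^* = \lambda(\A)$ by a sandwich argument and then read off the remaining two claims from it. The lower bound $d^* \geq \lambda(\A)$ and the outer inclusion $\A \subset w^{-1}([1,\infty))$ are already furnished by Lemma \ref{LemmaLowerBound}, and weak duality gives $p^* \leq d^*$; a feasible primal point (take $\mu_0 = \lambda\big|_\A$, $\hat\mu_0 = \lambda\big|_{X\setminus\A}$ and for $\mu_+,\mu_-$ the discounted occupation measures (\ref{DefOccMeasureDiscrete}) forward and, using that $f$ is a bijection of $\A$, backward, exactly mirroring Proposition \ref{PropLowerBound}) yields $p^* \geq \lambda(\A)$. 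Thus everything reduces to exhibiting a \emph{minimizing} dual sequence, i.e.\ feasible $(v^1_m,v^2_m,w_m)$ with $\int_X w_m\,d\lambda \to \lambda(\A)$. This simultaneously forces $d^* \le \lambda(\A)$, and since $w_m \geq 0$ on $X$ while $w_m \geq 1$ on $w_m^{-1}([1,\infty))$ gives $\int_X w_m\,d\lambda \geq \lambda(w_m^{-1}([1,\infty))) \geq \lambda(\A)$, it yields $\lambda(w_m^{-1}([1,\infty))) \to \lambda(\A)$ for this, and in fact any, minimizing sequence.

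Following the template of Theorem \ref{Thm}, I would build $v^1,v^2$ as discrete ``exit detectors'' and set $(v^1_m,v^2_m,w_m) := (m v^1,\, m v^2,\, \max\{0,\,1 + m v^1 + m v^2\})$, the point being that $v^1,v^2 \le 0$ on $X$ with $v^1 = 0$ exactly on $M_+$ and $v^2 = 0$ exactly on $M_-$. The forward part is immediate from Lemma \ref{LemmaDiscreteLiouville}: pick (Proposition \ref{PropSmoothDistance}) a bounded $p \in \C(\R^n)$ with $p^{-1}(\{0\}) = X$ and $p > 0$ elsewhere, and put $v^1 := -\sum_{k=0}^\infty \alpha^k\, p\circ f^k$, which solves $v^1 - \alpha\, v^1\circ f = -p$. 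Evaluated on $X$ the right-hand side is $0$, so constraint (\ref{DiscreteLiouvilleInequality}) holds with equality there, while $v^1(x) = 0$ iff $f^k(x) \in X$ for all $k$, i.e.\ iff $x \in M_+$.

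The main obstacle is the backward function $v^2$, where the discrete time reversal is genuinely asymmetric: constraint (\ref{DiscLiouvSwitchedF}) reads $v^2\circ f - \alpha\, v^2 \geq 0$, which after the substitution $y = f(x)$ becomes $v^2 - \alpha\, v^2\circ f^{-1} \geq 0$ on $f(X)$, and $f^{-1}$ is only defined on $f(X)$. This is where injectivity enters. Since $f$ is continuous and injective on the compact set $X$, the map $f|_X : X \to f(X)$ is a homeomorphism, so $f^{-1}: f(X) \to X$ is continuous and extends, by Tietze, to a bounded continuous $g:\R^n\to\R^n$ with $g\big|_{f(X)} = f^{-1}$. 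The key realization is that the correct detector set for the backward direction is $f(X)$, not $X$: choosing a bounded $q \in \C(\R^n)$ with $q^{-1}(\{0\}) = f(X)$ and $q < 0$ elsewhere (apply Proposition \ref{PropSmoothDistance} to the closed set $f(X)$ and negate), I set $v^2 := \sum_{k=0}^\infty \alpha^k\, q\circ g^k$, convergent, bounded and continuous by Lemma \ref{LemmaDiscreteLiouville}. A short telescoping computation using $g(f(x)) = x$ for $x \in X$ gives $(v^2\circ f - \alpha\, v^2)(x) = q(f(x)) = 0$ on $X$, so (\ref{DiscLiouvSwitchedF}) holds with equality; moreover $v^2 \leq 0$, and $v^2(x) = 0$ iff $g^k(x) \in f(X)$ for all $k$, which by unfolding the preimages is exactly the condition $x \in \bigcap_m f^m(X) = M_-$.

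With both detectors in hand the rest is routine. On $\A = M_+\cap M_-$ (Proposition \ref{PropGlobAttractorMaxInvSetDiscrete}) we have $v^1 = v^2 = 0$, hence $w_m = 1$; off $\A$ at least one of $v^1,v^2$ is strictly negative while both are nonpositive, so $1 + m(v^1+v^2) \to -\infty$ and $w_m \to 0$. Thus $w_m \searrow \mathbb{I}_\A$ pointwise on $X$, and dominated (monotone) convergence gives $\int_X w_m\,d\lambda \to \lambda(\A)$, closing the chain $\lambda(\A) \le p^* \le d^* \le \lambda(\A)$ and, by the first paragraph, delivering the volume convergence of the outer approximations. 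I expect the only delicate points to be the continuity and boundedness of $v^2$ through the extension $g$, and the equivalence ``$g^k(x) \in f(X)$ for all $k$'' $\Leftrightarrow$ ``$x \in M_-$'', both of which rest squarely on the injectivity of $f$ on $X$.
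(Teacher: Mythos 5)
Your dual-side argument is essentially the paper's proof: the same exit detector $v^1$ from Lemma~\ref{LemmaDiscreteLiouville}, the same Tietze extension $g$ of $f\big|_X^{-1}$, the same detector $q$ vanishing exactly on $f(X)$, the sequence $(mv^1,mv^2,\max\{0,1+mv^1+mv^2\})$, monotone convergence, and the sandwich $\lambda(\A)\le\lambda(w_m^{-1}([1,\infty)))\le\int_X w_m\,d\lambda\to d^*$ for an arbitrary minimizing sequence. The genuine gap is on the primal side, which you dispatch as ``exactly mirroring Proposition~\ref{PropLowerBound}''. The mirror breaks, for precisely the time-reversal asymmetry you yourself flag in the dual: the backward occupation measure $\nu(C)=\int_X\sum_{m=0}^\infty\alpha^m\mathbb{I}_C(g^m(x))\,d\lambda\big|_\A(x)$, i.e.\ (\ref{DefOccMeasureDiscrete}) with $f$ replaced by $g$, satisfies $\int v-\alpha v\circ g\,d\nu=\int v\,d\lambda\big|_\A$, whereas the second constraint of (\ref{OptimizationDiscreteProbApPrimal}) demands $\int v\circ f-\alpha v\,d\mu_-=\int v\,d\lambda\big|_\A$. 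Substituting $\nu$ for $\mu_-$ and using $f\circ g=\mathrm{Id}$ on $\A$ yields $\int v\circ f-\alpha v\,d\nu=\int v\circ f\,d\lambda\big|_\A$, which equals $\int v\,d\lambda\big|_\A$ only if $f$ pushes $\lambda\big|_\A$ forward to itself --- and a continuous bijection of $\A$ need not preserve volume on $\A$. The paper therefore sets $\mu_-:=g_\#\nu$ (a one-step shift) and verifies feasibility by writing $v=v\circ f\circ g$ on $\supp(\nu)=\A$. Without this correction you have only $p^*\le d^*=\lambda(\A)$ from weak duality, so the claim $p^*=\lambda(\A)$ is unproven; strong duality is not automatic for these infinite-dimensional LPs, which is exactly why the explicit primal construction is needed.

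A secondary overstatement: the equivalence you flag as delicate, ``$g^k(x)\in f(X)$ for all $k$'' iff ``$x\in M_-$'', holds only in the forward direction. For the converse, $x\in M_-=\bigcap_m f^m(X)$ provides for each $m$ \emph{some} preimage chain starting in $X$, but these chains need not be consistent with $g$: injectivity of $f$ on $X$ does not exclude preimages outside $X$, so the unique within-$X$ backward orbit $g^k(x)$ may exit $f(X)$ while chains whose intermediate iterates leave $X$ still witness $x\in M_-$. (Sketch: $0,1,2\in X$, $p\notin X$, $f:1\mapsto 0$, $0\mapsto 2\mapsto p\mapsto 0$; then $0\in M_-\cap f(X)$ but $g(0)=1\notin f(X)$, so $v^2(0)<0$.) What your argument actually needs, and what is true, is the inclusion $\{v^2=0\}\subset f(X)\cap M_-$ --- your forward direction --- together with $v^2=0$ on $\A$; the latter follows from $f(\A)=\A$ and injectivity, which give $g(\A)=\A$, hence $g^k(x)\in\A\subset f(X)$ for all $x\in\A$ and all $k$. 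This is why the paper states the vanishing set as $f(X)\cap M_-$ and remarks that the restriction to $f(X)$ is harmless for the attractor. With these two repairs your proof coincides with the paper's.
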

 
\begin{proof} The idea is again to construct a minimizing sequence for the dual problem, with objective value converging to $\lambda(\A)$, explicitly. This, together with Lemma \ref{LemmaLowerBound} and an explicit construction of $\lambda(\A)$ as a lower bound for the primal LP, guarantees that the optimal value of both LPs is given by $\lambda(\A)$.\\
We choose any bounded function $p\in \C(\R^n)$ such that $p^{-1}(\{0\}) = X$ and $p(x) < 0$ for $x \notin X$ (Proposition \ref{PropSmoothDistance}) and apply Lemma \ref{LemmaDiscreteLiouville} to find a function $v^1$ that satisfies $v^1 - \alpha v^1 \circ f = p$. The function $v^1$ solves the discrete Liouville inequality (\ref{DiscreteLiouvilleInequality}) since $p = 0$ on $X$. And further by construction of $v^1$ in (\ref{EqnConstructV1}) we have $v^1(x) < 0$ if and only if $x \notin M_+$. We want to do something similar for the time reversed equation. Therefore let us start with constructing a ``time reversed" system. Since $X$ is compact and $f$ is continuous and injective on $X$ the restriction $f\big|_X$ is a homeomorphism onto its image, i.e. $f\big|_X:X \rightarrow f(X)$ is continuously invertible. Let $\tilde{g}:f(X) \rightarrow X$ be its inverse. By Tietze's extension theorem let $g:\R^n \rightarrow \R^n$ be a continuous extension of $\tilde{g}$.
%Consider the discrete time dynamical system $(X,(g^m)_{m \in \N})$ on $X$. Note that the condition $f(Y) \supset X$ implies that $f^{-1}$ is defined on $X$ and hence we know $g$ on $X$. That gives that the MNI of the original system $(X,(f^m)_{m \in \N})$ is the MPI of the system $(X,(g^m)_{m \in \N})$. To check this let $x \in M_-$ for $f$ then for all $m \in \N$ there exists $f^{-n}(x) \in X$, hence $g^m(x) = f^{-n}(x)$ is in $X$ for all $m \in \N$. On the other hand, if $x \in M_+$ for $g$ then $g^m(x) = \in X$ for all $x \in X$ and hence $g^m(x)= f^{-n}(x)$. And it follows that $x \in \mathbb{MNI}$ for $f$.
Let now $q\in \C(\R^n)$ be any bounded continuous function. Again applying Lemma \ref{LemmaDiscreteLiouville} we get a function $v^2$ that solves
\begin{equation}\label{EqnV2}
	v^2(y) - \alpha v^2(g(y))= q(y) \; \;	\text{ for all } y \in \R^n.
\end{equation}
If we plug in $y = f(x)$ to (\ref{EqnV2}) for $x \in X$ we get
\begin{align*}
	q(f(x)) = q(y) = v^2(y) - \alpha v^2(g(y)) = v^2(f(x)) - \alpha v^2(g(f(x)) = v^2(f(x)) -\alpha v^2(x).
\end{align*}
This means $v^2$ satisfies (\ref{DiscLiouvSwitchedF}) for the dual problem if $q(f(x)) \geq 0$ for all $x \in X$. Hence let us take a bounded function $q \in \C(\R^n)$ such that $q \leq 0$ and $q^{-1}(\{0\}) = f(X)$. Let us check when $v^2$ does not vanish. By construction of $v^2$ from Lemma \ref{LemmaDiscreteLiouville} we have $v^2(x) = 0$ if and only if $g^m(x) \in q^{-1}(\{0\}) = f(X)$ for all $m \in \N$. This means $x \in f(X)$ and hence $X \ni f\big|_X^{-1}(x) = g(x) \in f(X)$. By induction it follows $g^m(x) \in X$ for all $m \in \N$, i.e. $x \in M_-$, and $x \in f(X)$. It looks like the additional condition $x \in f(X)$ is restrictive, but it is not since we are interested in the global attractor $\A$ that satisfies $\A = f(\A) \subset f(X)$. Let us construct a minimizing sequence of feasible solutions to the dual LP. As in the continuous time case let
\begin{equation*}
	(v^1_m,v^2_m,w_m) := (m\cdot v^1,m\cdot v^2,\max\{0,\one+m\cdot v^1 + m\cdot v^2\}.
\end{equation*}
Since $v^1$ and $v^2$ are solutions of the discrete dual Liouville equation the triple $(v^1_m,v^2_m,w_m)$ is feasible. Since $v^1$ is vanishing only on $M_+$ and $v^2$ on $f(X) \cap M_-$ and both functions are negative everywhere else we get $w_m(x) \leq 1$ and $w_m (x) = 1$ exactly for $f \in M_+ \cap f(X) \cap M_- = \A$ by Proposition \ref{PropGlobAttractorMaxInvSetDiscrete} and $w_m \searrow \mathbb{I}_\A$ pointwise as $m \rightarrow \infty$. Hence by the monotone convergence theorem, it follows $\int\limits_X w_m \; d\lambda \rightarrow \lambda(\A)$. By Lemma \ref{LemmaLowerBound} it follows for all $m \in \N$ that $(v^1_m,v^2_m,w_m)$ is a minimizing sequence. Let now $(v^1_m,v^2_m,w_m)$ be any minimizing sequence. By Lemma \ref{LemmaLowerBound} we get $\lambda(\A) \leq \lambda(w_m^{-1}([1,\infty)) \leq \int\limits_X w_m \; d\lambda \rightarrow \lambda(\A)$ as $m \rightarrow \infty$.\\
To check that $\lambda(\A)$ is also the optimal value of the primal LP we proceed as in the continuous time case by showing that $\lambda(\A)$ is a lower bound for the primal LP. Then it follows that the optimal value is given by $\lambda(\A)$ by weak duality, i.e. from
\begin{equation*}
	\lambda(\A) \leq p^* \leq d^* = \lambda(\A).
\end{equation*}
To see $p^* \geq \lambda(\A)$ we will find a feasible point $(\mu_0,\hat{\mu}_0,\mu_+,\mu_-) = (\lambda\big|_\A,\lambda\big|_{X \setminus \A},\mu_+,\mu_-)$. Then the objective value is $\mu_0(X) = \lambda(A)$. Let $\mu_+$ be defined by the right-hand side of (\ref{DefOccMeasureDiscrete}) with $\mu_0 = \lambda\big|_\A$. In order to find $\mu_-$, we first let $\nu$ be the defined by the right-hand side of~(\ref{DefOccMeasureDiscrete}) with $\mu_0 = \lambda\big|_\A$ and $f$ replaced by $g$.
This relates to reversing time direction. Note that from $f(\A) = \A$ it follows that also $g(\A) = \A$ and hence that $\supp(\nu) = \A$. Set $\mu_{-}(C) := \nu(g^{-1}(C))$, i.e., $\mu_{-}$ is the pushforward measure of $\nu$ by $g$. Since measures defined by~(\ref{DefOccMeasureDiscrete}) solve the discrete Liouville equation (first constraint of~(\ref{OptimizationDiscreteProbApPrimal})) and $g\big|_\A = f\big|_\A^{-1}$ we get for all $v \in \C(\R^n)$
\begin{eqnarray*}
	\int\limits_X v \; d\lambda\big|_\A & = & \int\limits_X v - \alpha v \circ g \; d\nu = \int\limits_{\supp(\nu)} v - \alpha v \circ g \; d\nu = \int\limits_\A v \circ f \circ g - \alpha v \circ g\; d\nu\\
										& = & \int\limits_{g(\A)} v \circ f - \alpha v \; d\mu_{-} = \int\limits_X v \circ f - \alpha v \; d\mu_{-},
\end{eqnarray*}
where we used the fact that $\supp(\mu_-) = g(\A) = \A \subset X$. Therefore, $\mu_-$ solves the reversed time Liouville equation (second constraint of~(\ref{OptimizationDiscreteProbApPrimal})). It follows that $(\lambda\big|_\A,\lambda\big|_{X \setminus \A},\mu_+,\mu_-)$ is feasible with objective value $\lambda(\A)$ which is what remained to be shown.
\end{proof}
%\comment{Seems like here you used only injectivity on the attractor. Using injectivity on $X$ would make the last computation  shorter but it's fine to keep it this way.}

The infimum in the dual program is not attained unless $\A$ is a union of connected components of $X$, because only then $\mathbb{I}_\A$ is continuous. But we have seen in the proof that there exist feasible solutions $(v^1,v^2,w)$ such that $\A = w^{-1}([1,\infty))$ even before their objective value approaches the optimal value.

\section{Solving the linear programs}
If we assume algebraic structure of the problem this can be exploited to solve the infinite dimensional primal LP by a hierarchy of finite dimensional semidefinite programs whose optimal values converge to the solution of the infinite dimensional LP. The resulting SDPs are relaxations of the original LP since they describe truncated versions of the moment problem. The SDPs can be solved by freely available software. Similarly, the dual LP tightens to a sum-of-squares problem, which also leads to a hierarchy of SDPs. This is a standard procedure and we refer to~\cite{c16neu} for details.

\textbf{Assumption:} The vector field $f$ is polynomial and $X$ is a compact basic semi-algebraic set, that is, there exist polynomials $p_1,\ldots,p_j \in \R[x_1,\ldots,x_n]$ such that $X = \{x \in \R^n: p_i(x) \geq 0 \text{ for } i = 1,\ldots,j\}$. Further we assume that one of the $p_i$ is given by $p_i(x) = R_X^2 - \|x\|_2^2$ for some large enough $R_X \in \R$.

If there is no such $p_i$ then by compactness of $X$ we can add the redundant inequality $p_{k+1}:= R_X - \|x\|_2^2 \geq 0$ for the smallest radius $R_X$ such that $\overline{B_{R_X}(0)}$ contains $X$. This will be useful in order to apply Putinar's Positivstellensatz (see~\cite{c27neu}). We will only state the dual tightenings of the problems because these provide guaranteed outer approximations of the global attractor, while this may not be true for the primal problem. In order to solve the infinite dimensional problem we first replace the space of continuous functions by the space of polynomials; this is justified by the Stone-Weierstra{\ss} theorem. Then we truncate the degree of the polynomials to get tightenings of the dual problem in form of finite dimensional SDPs. The idea is to replace all variables, i.e. functions, in the LP by elements of $\R[x]_k$, polynomials of degree at most $k$, and apply Putinar's Positivstellensatz to reformulate positivity as a sum-of-squares constraint. The corresponding tightenings truncated at degree $k$ for the problem in continuous time read as
\begin{equation}\label{OptimizationSDPDegreedContinuous}
	\begin{tabular}{llc}
		$d_k:=$ & $\inf\limits_{v^1,v^2,w,\{q_i\},\{t_i\},\{r_i\},\{s_i\} }\;\;\; \mathbf{w}' \mathbf{l}$& \vspace{0.5mm}\\
		s.t.
			 & $-v^1-v^2 + w - 1 = q_0 + \sum\limits_{i = 1}^j q_i p_i$& \\
			 & $w(x) = t_0 + \sum\limits_{i = 1}^j t_i p_i$&\\
			 & $ \beta v^1 - \nabla v^1 \cdot f = r_0 + \sum\limits_{i = 1}^j r_i p_i$ &\\
			 & $ \beta v^2 + \nabla v^2 \cdot f = s_0 + \sum\limits_{i = 1}^j s_i p_i$ & 
	\end{tabular}
\end{equation}
and for discrete time systems respectively
\begin{equation}\label{OptimizationSDPDegreedDiscrete}
	\begin{tabular}{llc}
		$d_k:=$ & $\inf\limits_{v^1,v^2,w,\{q_i\},\{t_i\},\{r_i\},\{s_i\} }\;\;\; \mathbf{w}' \mathbf{l} $& \vspace{0.5mm}\\
		s.t. & $v^1,v^2,w \in \R[x]_k$ &\\
			 &  $w-v^1-v^2-1= q_0 + \sum\limits_{i = 1}^j q_i p_i$ &\\
			 &  $w = t_0 + \sum\limits_{i = 1}^j t_i p_i$&\\
			 &  $v^1 - \alpha v^1\circ f  = r_0 + \sum\limits_{i = 1}^j r_i p_i$&\\
			 &  $v^2\circ f - \alpha v^2 = s_0 + \sum\limits_{i = 1}^j s_i p_i$,&
	\end{tabular}
\end{equation}
where $\mathbf{w}'$ is the vector of coefficients of the polynomial $w$ and $\mathbf{l}$ is the vector of the moments of the Lebesgue measure over $X$ (i.e., $\mathbf{l}_\alpha = \int_X x^\alpha \, d\lambda(x)$, $\alpha \in \mathbb{N}^n$, $\sum_i \alpha_i \le k$), both indexed in the same basis of $\R[x]_k$; hence $\mathbf{w}'\mathbf{l} = \int\limits_X w(x) \; d\lambda(x)$. The decision variables $v^1, v^2 , w$ are polynomials in $\R[x]_k$ whereas $q_0,\ldots,q_j,r_0,\ldots,r_j,s_0,\ldots,s_j,t_0,\ldots,t_j$ are sums of squares of polynomials with degrees such that $q_0$, $t_0$, $r_0$, $s_0$, $q_ip_i$, $t_ip_i$, $r_ip_i$, $s_ip_i$ are all in $\R[x]_k$ for all $i = 1,\ldots,j$. These sum-of-squares optimization problems translate directly to convex SDPs (see, e.g., \cite{c16neu,c23neu}) with high-level modeling software available (e.g., Yalmip~\cite{c18neu}, Gloptipoly~\cite{c7neu}).
 
\begin{theorem}\label{ThmSDPConvergence} For all $k \in \N$ we have $d_k \geq d_{k+1}$ and $d_k\rightarrow \lambda(\A)$ as $k\rightarrow \infty$.
\end{theorem}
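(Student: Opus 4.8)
The plan is to establish the two claims by treating them separately: monotonicity $d_k \ge d_{k+1}$ follows from a nesting argument, and convergence $d_k \to \lambda(\A)$ follows by squeezing $(d_k)$ between the fixed lower bound $\lambda(\A)$ and upper bounds coming from an explicitly constructed polynomial minimizing sequence. For monotonicity I would first note that the objective $\mathbf{w}'\mathbf{l} = \int_X w\,d\lambda$ does not depend on $k$, and that the feasible set at level $k$ embeds into that at level $k+1$: any feasible $(v^1,v^2,w)$ with its sum-of-squares certificates $\{q_i\},\{t_i\},\{r_i\},\{s_i\}$ lies in $\R[x]_k \subset \R[x]_{k+1}$, and the SOS cones are nested, so the same data is feasible at level $k+1$ with the same objective; taking infima over the larger set gives $d_{k+1}\le d_k$. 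For the lower bound I would observe that, since each $p_i \ge 0$ on $X$ and each multiplier is a sum of squares, the four equality constraints of (\ref{OptimizationSDPDegreedContinuous}) (resp. (\ref{OptimizationSDPDegreedDiscrete})) force the corresponding expressions to be nonnegative on $X$; hence every SDP-feasible triple is feasible for the infinite-dimensional dual LP (\ref{OptimizationProbAp0}) (resp. (\ref{OptimizationDiscreteProbAp0})). By Lemma \ref{LemmaPositivity} (resp. Lemma \ref{LemmaLowerBound}) such a triple satisfies $w \ge 1$ on $\A$ and $w \ge 0$ on $X$, so $\int_X w\,d\lambda \ge \lambda(\A)$, giving $d_k \ge \lambda(\A)$. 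Together with monotonicity, $(d_k)$ is nonincreasing and bounded below, hence converges to some $d_\infty \ge \lambda(\A)$.

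It remains to prove $d_\infty \le \lambda(\A)$, and this is the substantive part. For every $\varepsilon > 0$ I would produce a \emph{polynomial} triple that is feasible for some finite level $k$ and has objective at most $\lambda(\A)+\varepsilon$. The starting point is the near-optimal feasible solutions $(v^1_m,v^2_m,w_m)$ of the dual LP built in the proofs of Theorem \ref{Thm} and Theorem \ref{TheoremLPOptimalValueDiscrete}, for which $\int_X w_m\,d\lambda \to \lambda(\A)$; these are $\C^1$ in continuous time (via the regularizing construction with $\beta > \mathrm{Lip}(f)$) and continuous in discrete time. The obstruction is that these solutions satisfy the Liouville inequalities and the domination constraint with \emph{equality} on $X$, which is incompatible with a Putinar representation. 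I would therefore first perturb them into \emph{strictly} feasible solutions at a cost of $O(\varepsilon)$ in the objective: adding a small positive constant to $v^1$ and $v^2$ turns $\beta v^1 - \nabla v^1\cdot f$ and $\beta v^2 + \nabla v^2\cdot f$ (resp. $v^1 - \alpha v^1\circ f$ and $v^2\circ f - \alpha v^2$, where $\alpha<1$ makes the constant term positive) into expressions bounded below by a positive constant on $X$, while adding a small positive constant to $w$ makes both $w>0$ and $-v^1-v^2+w-1>0$ strict on $X$; the added constants can be compensated so that the objective moves by at most $O(\varepsilon)$.

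Then I would approximate this strictly feasible continuous triple by polynomials. In discrete time a uniform ($\C^0$) approximation suffices, since only the compositions $v\circ f$ enter; in continuous time a simultaneous $\C^1$ approximation of the functions and their gradients (via the Weierstrass theorem on a neighborhood of $X$) is required because of the term $\nabla v\cdot f$. Because all four inequalities hold strictly with a uniform margin on the compact set $X$, a sufficiently fine approximation preserves strict positivity on $X$, while uniform closeness of $w$ controls the $\mathrm{L}^1(X,\lambda)$ objective. Invoking the Archimedean property guaranteed by the ball constraint $R_X^2 - \|x\|_2^2 \ge 0$ in the standing Assumption, Putinar's Positivstellensatz \cite{c27neu} represents each of the four strictly positive polynomials as $\sigma_0 + \sum_i \sigma_i p_i$ with $\sigma_i$ sums of squares; these certificates have some finite degree, so the polynomial triple is feasible for the degree-$k$ program at that $k$, yielding $d_k \le \lambda(\A)+\varepsilon$. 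Letting $\varepsilon \to 0$ gives $d_\infty \le \lambda(\A)$, hence $d_\infty = \lambda(\A)$ and, by monotonicity, $d_k \to \lambda(\A)$.

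I expect the main obstacle to be the combination of the perturbation-to-strict-feasibility step with the simultaneous $\C^1$ polynomial approximation in continuous time: one must ensure that the positive margins created by the perturbation dominate the approximation error in \emph{all four} constraints at once, including the derivative term $\nabla v\cdot f$, before Putinar's theorem becomes applicable. A secondary point to handle carefully is that the candidate $w_m = \max\{0,1+mv^1+mv^2\}$ is only Lipschitz, so its uniform polynomial approximation must be controlled against the chosen positivity slack.
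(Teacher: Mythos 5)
Your proposal is correct and follows essentially the same route as the paper's proof: monotonicity via nested feasible sets and SOS cones, the lower bound $d_k \ge d^* = \lambda(\A)$ because SOS-feasibility implies feasibility for the dual LPs (\ref{OptimizationProbAp0}) and (\ref{OptimizationDiscreteProbAp0}), and the upper bound via an $\varepsilon$-perturbation to strict feasibility followed by Stone--Weierstra{\ss} approximation (simultaneously in the gradient in continuous time) and Putinar's Positivstellensatz. The only cosmetic differences are that you anchor the construction on the explicit minimizing sequences from Theorems \ref{Thm} and \ref{TheoremLPOptimalValueDiscrete} while the paper perturbs an arbitrary feasible triple, and your secondary worry about $w_m$ being merely Lipschitz is moot, since no constraint involves derivatives of $w$ and uniform approximation of $w$ suffices.
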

 
\begin{proof} We cover both discrete and continuous time simultaneously because the arguments are the same. The inequality $d_k \geq d_{k+1}$ follows immediately since the set of feasible elements is monotonically increasing with $k$. To prove convergence note first that any triple $(v^1,v^2,w)$ that is feasible for the relaxed problem (\ref{OptimizationSDPDegreedContinuous}) and (\ref{OptimizationSDPDegreedDiscrete}) is feasible for the original dual LPs (\ref{OptimizationProbAp0}) and (\ref{OptimizationDiscreteProbAp0}), hence we have $d_k \geq d^*$ for all $k \in \N$. By Theorem~\ref{Thm} or Theorem~\ref{TheoremLPOptimalValueDiscrete} respectively we have $d^* = \lambda(\A)$. To prove that $\lim\limits_{k \rightarrow \infty} d_k \leq d^* = \lambda(\A)$ let $\varepsilon > 0$ and $(v^1,v^2,w)$ be feasible for the dual LP. Then $(v^1+\varepsilon,v^2+\varepsilon,w+\varepsilon)$ is strictly feasible and by compactness and the Stone-Weierstra{\ss} theorem we can find polynomials $\nu^1,\nu^2,\omega \in \R[x_1,\ldots,x_n]$ such that $\max\{\|v^1- \nu^1\|_\infty,\|\nabla v^1 - \nabla \nu^1\|_\infty\}, \max\{\|v^2-\nu^2\|_\infty, \|\nabla v^2 - \nabla \nu^2\|_\infty\}< \frac{\beta}{1+\beta} \varepsilon$ in the continuous time case and $\|v^1- \nu^1\|_\infty,\|v^2-\nu^2\|_\infty< \frac{1-\alpha}{1+\alpha} \varepsilon$ in the discrete time case and $\|w-\omega\|_\infty < \varepsilon$ in both cases. By the triangle inequality we see that $(\nu^1,\nu^2,\omega)$ is strictly feasible with objective value $\int\limits_X \omega \; d\lambda \leq \int\limits_X w \; d\lambda + \varepsilon \lambda(X)$. Since $\varepsilon > 0$ was arbitrary we see that the optimal value is unchanged when restricting the decision variable to polynomials. The convergence then follows from Putinar's Positivstellensatz \cite{c27neu}.
\end{proof}

\subsection{Converging outer approximations}
In this section, we use solutions to the sum-of-squares program ~(\ref{OptimizationSDPDegreedContinuous}) for continuous time systems and~(\ref{OptimizationSDPDegreedDiscrete}) for discrete time systems to define semialgebraic outer approximations to the global attractor $\A$ and prove their convergence to $\A$. Specifically, we define
\begin{equation}\label{eq:outerAppW}
	Y_k:= \{x \in X: w_k(x) \geq 1\}
\end{equation}
and
\begin{equation}\label{eq:outerApp}
X_k := \{ x\in X \mid \min\{v^1_k(x), v^2_k(x)\} \ge 0  \},
\end{equation}
where ($v^1_k,v^2_k,w_k)$ is a solution to~(\ref{OptimizationSDPDegreedContinuous}) or ~(\ref{OptimizationSDPDegreedDiscrete}) respectively. We will see that $X_k$ provides a better approximation of $\A$ than $Y_k$. We did not use those approximations before because they were not needed in the proofs and we think that the superlevel set $w^{-1}([1,\infty))$ is easier recognized as a reasonable outer approximation of the global attractor closely connected to the optimal value of the dual LP.

\begin{theorem} \label{thm:convOuter}
For each $k\in \N $ we have $Y_k \supset X_k\supset \A$. In addition,
\[
\lim_{k\to \infty} \lambda(Y_k\setminus \A) = \lim_{k\to\infty} \lambda(X_k \setminus \A) = 0.
\]
\end{theorem}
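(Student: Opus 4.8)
The plan is to establish the set inclusions first and then deduce both volume limits from the convergence $d_k \to \lambda(\A)$ already proved in Theorem \ref{ThmSDPConvergence}. I would begin with the chain $Y_k \supset X_k \supset \A$. The inclusion $\A \subset X_k$ is immediate from the positivity lemmas: any feasible triple $(v^1_k, v^2_k, w_k)$ satisfies $v^1_k \geq 0$ on $M_+$ and $v^2_k \geq 0$ on $M_-$ (Lemma \ref{LemmaPositivity} in continuous time, Lemma \ref{LemmaLowerBound} in discrete time), and since $\A = M_+ \cap M_-$, both functions are nonnegative on $\A$; hence $\min\{v^1_k, v^2_k\} \geq 0$ there, giving $\A \subset X_k$. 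For $X_k \subset Y_k$ I would invoke the first dual constraint $-v^1_k - v^2_k + w_k \geq 1$, i.e.\ $w_k \geq 1 + v^1_k + v^2_k$: on $X_k$ both $v^1_k$ and $v^2_k$ are nonnegative, so $w_k \geq 1$, which is exactly the defining condition of $Y_k$.

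For the volume limits it suffices to treat $Y_k$, since $\A \subset X_k \subset Y_k$ yields $\lambda(X_k \setminus \A) \leq \lambda(Y_k \setminus \A)$ for every $k$. The key observation is that the feasible $w_k$ dominates the indicator of its own superlevel set: $w_k \geq 0$ on all of $X$ and $w_k \geq 1$ on $Y_k$, so $w_k \geq \mathbb{I}_{Y_k}$ pointwise on $X$. Integrating against $\lambda$ gives $\lambda(Y_k) \leq \int_X w_k \, d\lambda = d_k$, while $\A \subset Y_k$ gives $\lambda(\A) \leq \lambda(Y_k)$. Combining these with Theorem \ref{ThmSDPConvergence} produces the squeeze $\lambda(\A) \leq \lambda(Y_k) \leq d_k \to \lambda(\A)$, so $\lambda(Y_k) \to \lambda(\A)$. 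Because $\A \subset Y_k$ we have $\lambda(Y_k \setminus \A) = \lambda(Y_k) - \lambda(\A) \to 0$, and the inclusion $X_k \subset Y_k$ then forces $\lambda(X_k \setminus \A) \to 0$ as well.

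I do not expect a serious obstacle, since the argument is essentially bookkeeping once the positivity lemmas and the objective-value convergence are in hand. The one point requiring care is the inequality $w_k \geq \mathbb{I}_{Y_k}$, which is what converts convergence of the objective values $\int_X w_k \, d\lambda = d_k$ into convergence of the Lebesgue measures of the approximating sets; it relies crucially on the feasibility constraint $w_k \geq 0$ holding on all of $X$, not merely on $\A$. I would also note that $(v^1_k, v^2_k, w_k)$ must be read either as an actual minimizer of the degree-$k$ SDP or, more generally, as any minimizing sequence of feasible points, so that $\int_X w_k \, d\lambda$ equals $d_k$ up to a vanishing error; the conclusion is unaffected under either reading.
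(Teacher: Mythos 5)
Your proposal is correct and follows essentially the same route as the paper's own proof: the inclusions come from the positivity lemmas and the constraint $w_k \geq 1 + v^1_k + v^2_k$, and the volume limits from the squeeze $\lambda(\A) \leq \lambda(Y_k) \leq \int_X w_k\,d\lambda = d_k \to \lambda(\A)$, with your pointwise bound $w_k \geq \mathbb{I}_{Y_k}$ being exactly the paper's integration step over the superlevel set. Your closing remark about minimizers versus minimizing sequences is a reasonable clarification but not a departure from the paper's argument.
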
 

\begin{proof} For any feasible triple $(v^1,v^2,w)$ (of the dual LP as well as the tightened problems) the condition $\min\{v^1(x),v^2(x)\} \geq 0$ for some $x \in X$, i.e. $v^1(x),v^2(x) \geq 0$, implies, by feasibility, that $w(x) \geq 1+v^1(x)+v^2(x) \geq 1$. This gives $Y_k \supset X_k$ for any $k \in \N$. From Lemma \ref{LemmaPositivity} and Lemma \ref{LemmaLowerBound} respectively it follows for all $x \in \A$ that $v^1(x),v^2(x) \geq 0$, hence also $X_k \supset \A$ for any $k \in \N$. To check convergence it suffices to show only $\lambda(Y_k \setminus \A) \rightarrow 0$ because $\A \subset X_k \subset Y_k$. From $Y_k \supset \A$ we get $\lambda(Y_k \setminus \A) = \lambda(Y_k) - \lambda(\A)$ and it also suffices to check $\lambda(Y_k) \rightarrow \lambda(\A)$. Let $d_k$ be the optimal value of the tightening SDP~(\ref{OptimizationSDPDegreedContinuous}) or~(\ref{OptimizationSDPDegreedDiscrete}) respectively and $(v^1_k,v^2_k,w_k)$ a corresponding minimizer. We have by non-negativity of $w_k$
\begin{eqnarray*}\label{EqnSandwichdkYkA}
	d_k = \int\limits_X w_k \; d\lambda \geq \hspace{-5mm} \int\limits_{w_k^{-1}([1,\infty))} \hspace{-4mm}  w_k \; d\lambda \geq \int\limits_{Y_k} 1 \; d\lambda = \lambda(Y_k) \geq \lambda(\A).
\end{eqnarray*}
By Theorem \ref{ThmSDPConvergence} we have $d_k \rightarrow \lambda(\A)$, and it follows $\lambda(Y_k) \rightarrow \lambda(\A)$.
\end{proof}

The asymptotic convergence to the global attractor was proven for all parameters $\beta > 0$ and $\alpha \in (0,1)$ respectively. But when computing an outer approximation the choice of this parameter has a quantitative effect. Note that the limit case $\beta = 0$ and $\alpha = 0$ respectively corresponds to the problem of finding an invariant measure, while large values of $\beta$ respectively $\alpha$ give high discounting, i.e. the occupation measure takes short time evolution of the system more into account.

\section{Numerical examples}

We present three numerical examples with code available online from
\begin{center}
\footnotesize \url{https://homepages.laas.fr/mkorda/Attractor.zip}
\end{center}

 Two of the examples have strange attractors and one example has a stable limit cycle. The systems with strange attractors are the Lorenz system (continuous time)
\begin{equation}
	\dot{x} = 10(y-x), \;\; \dot{y} = x(28-z)-y, \;\; \dot{z} = xy-\frac{8}{3}z
\end{equation}
and for discrete time, we consider the H\'{e}non map, scaled such that the attractor is inside the unit box, 
\begin{equation}
	x_{m+1} = \frac{2}{3}(1+y_m) - 2.1 x_m^2, \quad  y_{m+1} = 0.45 x_m.
\end{equation}
The third example is the Van--der--Pol oscillator
\begin{equation}
	\dot{x} = 2y,  \quad \dot{y} = -0.8x - 10(x^2-0.21)y.
\end{equation}

The numerical approximations of the attractors were generated by simulation of very long trajectories, discarding the initial portions. The SDP  problems were solved using MOSEK. The figures  Fig. \ref{figLorenz}, Fig. \ref{figHenon} and Fig. \ref{figVanDerPol} show the outer approximations of the global attractors given by $X_k$ from (\ref{eq:outerApp}) arising from the tightening SDPs with degree bound $k = 8$ for the Lorenz system, $k = 8$ and $k = 10$ for the H\'{e}non map and $k = 12$ for the Van-der-Pol oscillator.\\
The outer approximation of the global attractor for the Lorenz system is drawn in light red.
\begin{figure*}[h]
\begin{picture}(400,220)
\put(30,30){\includegraphics[width=55mm]{./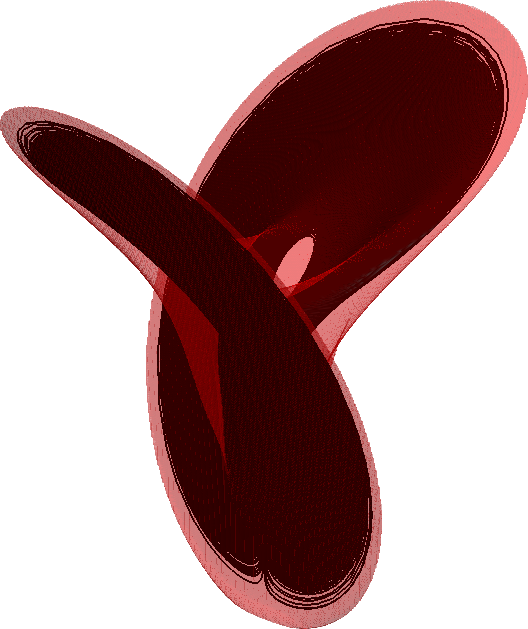}}
\put(260,30){\includegraphics[width=39mm]{./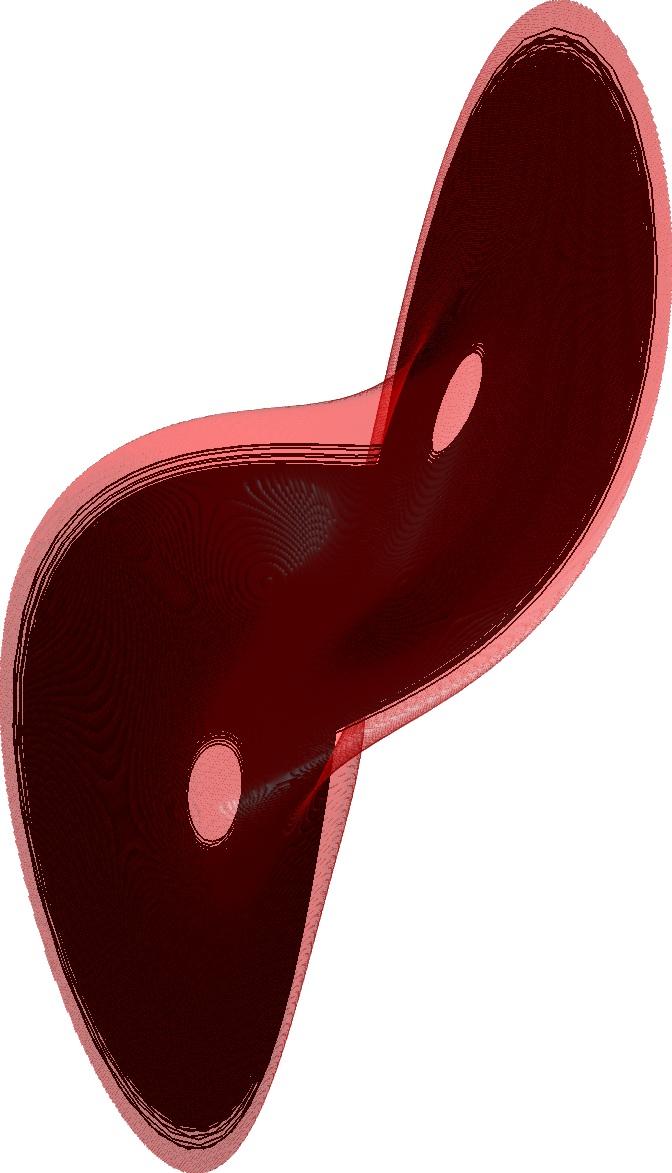}}
\end{picture}
\caption{\footnotesize{Outer approximations for the Lorenz attractor obtained by degree 8 polynomials with discount factor $\beta = 1$ from two angles.}}
\label{figLorenz}
\end{figure*}

The figures in Fig. \ref{figLorenz} show that solutions of the problem truncated at degree 8 are already able to capture the global attractor very well. The time to solve the SDP (ref to dual SOS tightening) was 0.67s with MOSEK 8.1 running on a machine with 4,2 GHz Intel Core i7 and 32 GB 2400 MHz DDR4 RAM.\\

Although we know that the optimal values $d_k = \int w_k$ from the tightening SDPs monotonically decrease to $\lambda(\A)$ it is not guaranteed that the sets $X_k$ and $Y_k$ are monotonically shrinking towards the global attractor. And it is not to be expected that $X_k$ and $Y_k$ show a monotone decay. But that on the other hand allows us to get better results by combining lower degree approximations with higher degree ones.
In addition to that the freedom in the choice of the scalar parameter $\beta$ respectively $\alpha$ allows further refinement. The right pane of Figure~\ref{figHenon} shows the intersection of the outer approximations obtained by the tightening SDPs up for degrees $4,6$ and $8$ and a scalar grid of the parameter $\alpha$. For the H\'{e}non map the outer approximation is given by the grey colored area.
\begin{figure*}[h]
\begin{picture}(300,210)
\put(30,25){\includegraphics[width=62mm]{./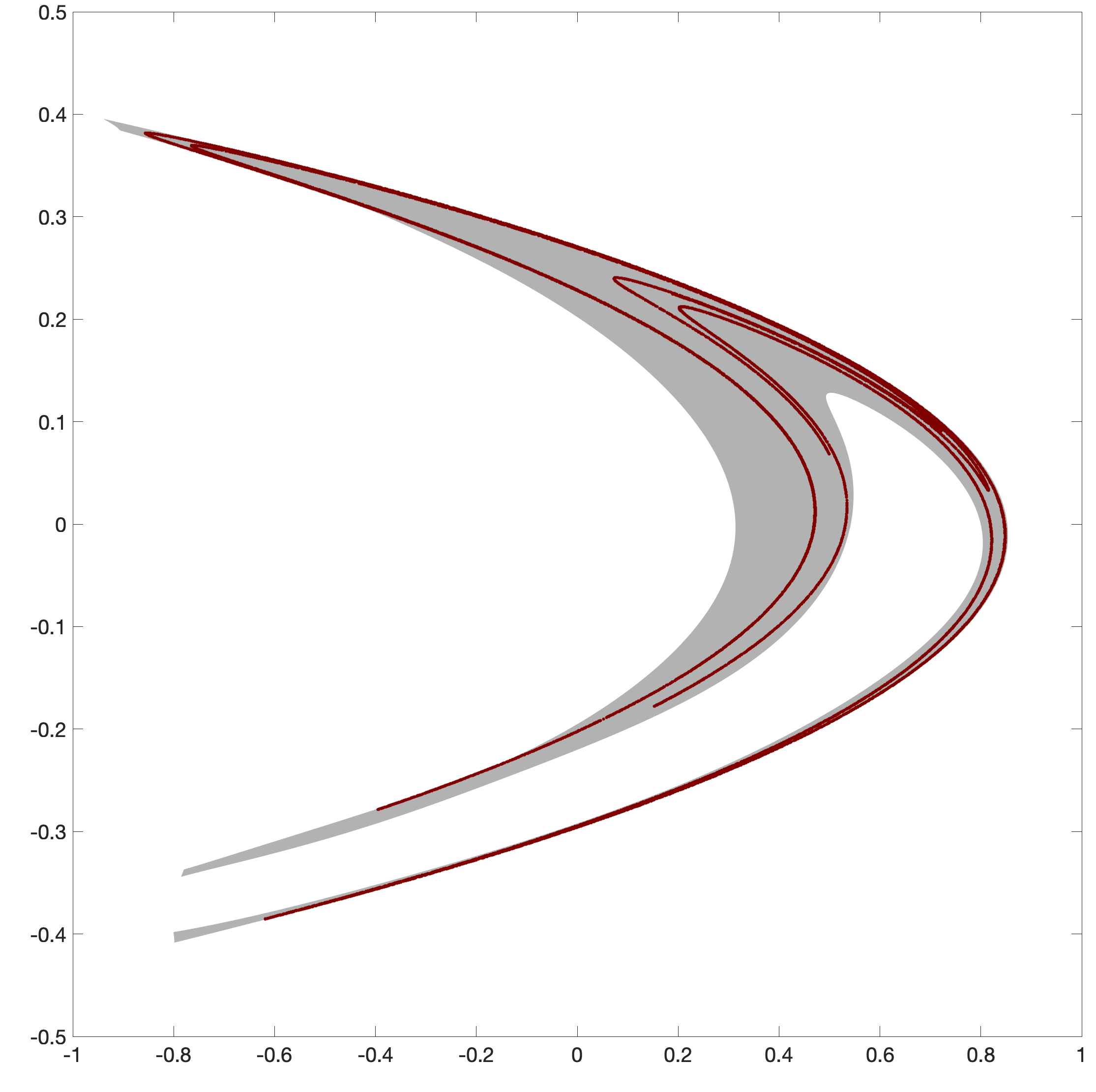}}
\put(250,25){\includegraphics[width=66.4mm]{./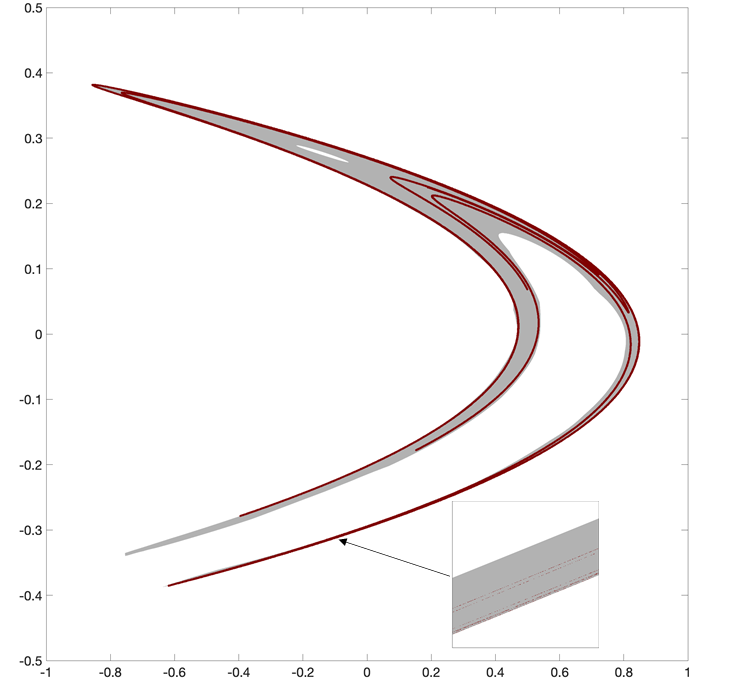}}
\end{picture}
\vspace{-7mm}
\caption{\footnotesize{Outer approximations for the H\'{e}non attractor. Left: $\alpha = 0.05$ and degree 8 polynomials. Right: The figure shows the intersection of the approximations obtained by degree 6,8 and 10 polynomials, all with $\alpha = 0.05$}.}
\label{figHenon}
\end{figure*}

In general, we expect that more complicated topological structures, such as holes, require higher degree polynomials to be identified by our approach and further, since we only gave a guaranteed convergence in terms of the Lebesgue measure, we may not have full control of all topological properties of the outer approximations of the global attractor\footnote{Getting a guaranteed asymptotic control for the topological properties of the attractor would require convergence in the Hausdorff metric. Proving such convergence remains a challenging and so far elusive task for the moment-sum-of-squares approach, here as well as in previous works (e.g.,~\cite{c10neu,c8neu}).}. But we have already seen in the previous figure Fig. \ref{figHenon} that our approach recognizes holes.\\
The Van-der-Pol oscillator is an example where the global attractor is given by an asymptotically stable limit cycle. So the solutions to the SDP tightenings have to detect the limit cycle and hence this is connected to the task of finding holes which we have also seen for the H\'{e}non map.\\
Here it is important to choose the set $X$ a bit more carefully. For the left pane in Figure~(\ref{figVanDerPol}) we chose $X = \{(x_1,x_2) \in \R^2 : 0.4 \leq \|(x_1,x_2)\|_2 \leq 2\}$ so that the limit cycle is included in $X$ but the initial value $(0,0)$ corresponding to the trivial solution $x(t) = y(t) = 0$ for all $t$ is not included. The difference is that if $(0,0)$ is in $X$, then the limit cycle and its whole interior is the global attractor. This is detected by our approach as shown in the right pane of Figure~\ref{figVanDerPol}. The reason why in that case the attractor is the much larger set is that the interior of the limit cycle is the unstable manifold of the equilibrium point $(0,0)$, hence contained in the global attractor.

\begin{figure*}[h]
\begin{picture}(300,210)
\put(250,25){\includegraphics[width=62.5mm]{./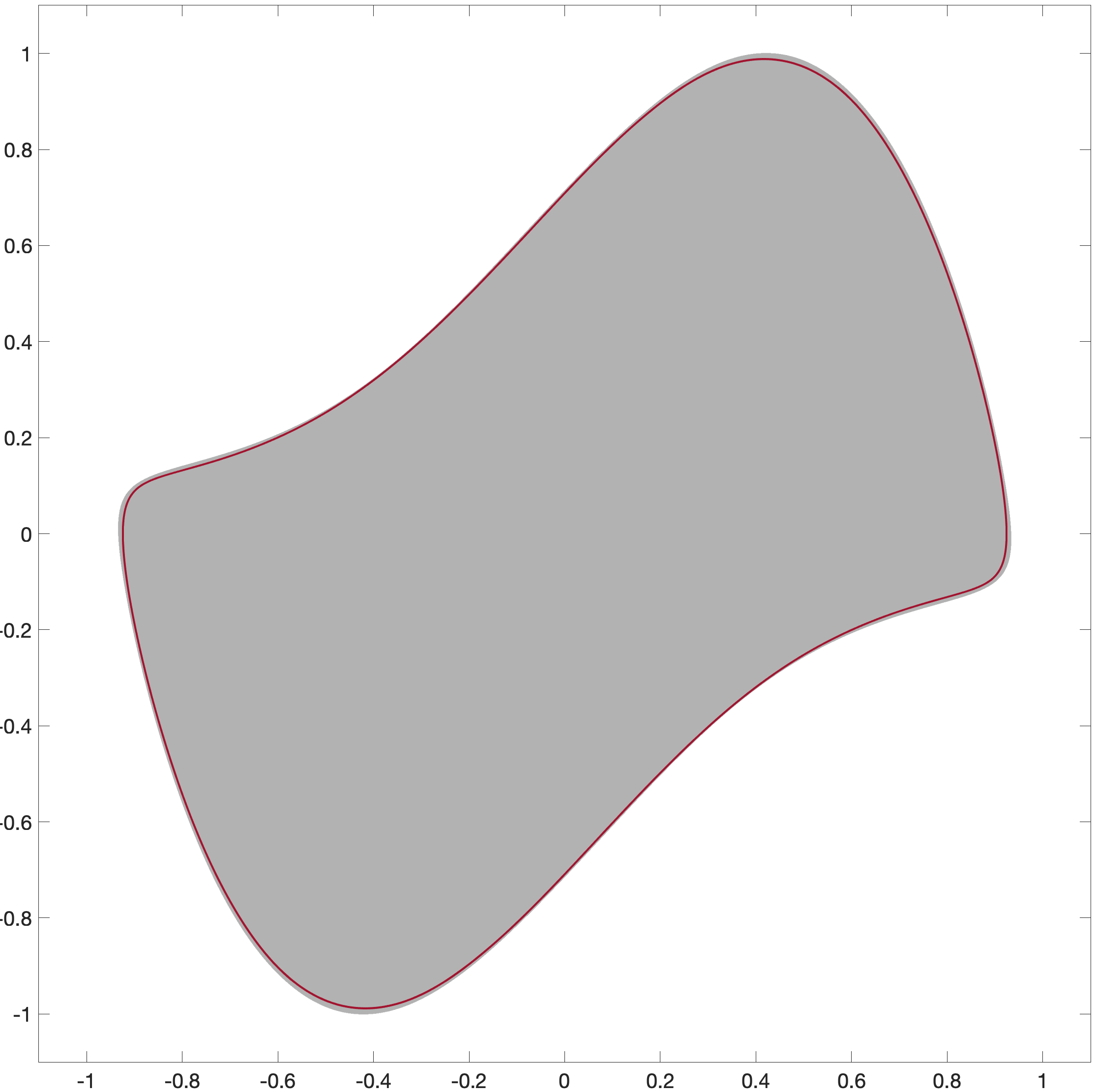}}
\put(30,21){\includegraphics[width=69mm]{./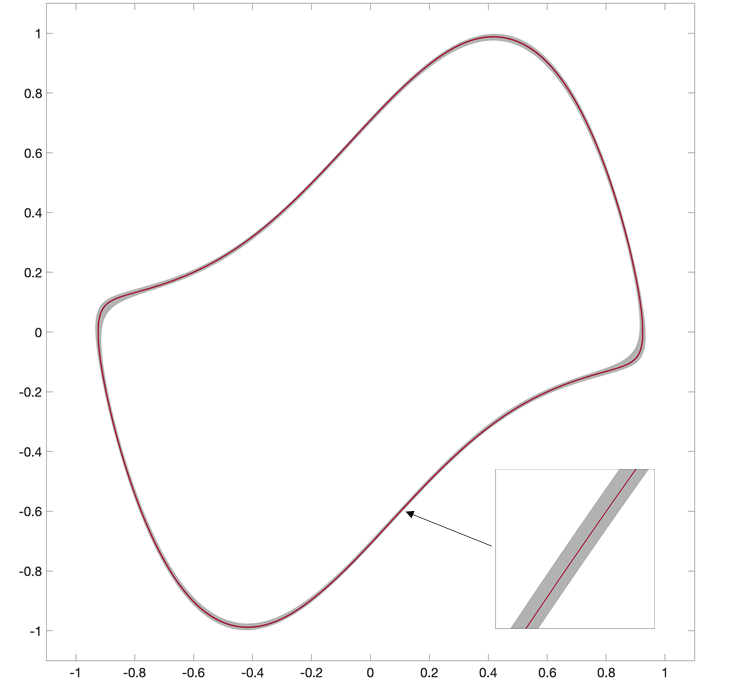}}
\end{picture}
\caption{\footnotesize{Outer approximations of the global attractor for the Van-der-Pol oscillator. Left: approximation with $X = \{x \mid 0.4 \le\|x\|_2 \le 2\}$ (fixed point $(0,0)$ not included), degree 12 polynomials and $\beta = 0.05$. Right: approximation with $X = \{x \mid \|x\|_2 \le 2\}$ (fixed point included), degree 12 polynomials and $\beta = 2$.}}
\label{figVanDerPol}
\end{figure*}

\vspace{-1mm}
The figures in Fig. \ref{figVanDerPol} shows that the global attractor is detected well. But our numerical examples also showed that one has to be careful with numerical issues because, in the case of the Van--der--Pol oscillator for lower degree polynomials, the graph of $\min\{v^1,v^2\}$ is very flat around the global attractor which leads to possible round off issues when depicting the superlevel set $X_k = \min\{v^1,v^2\}^{-1}([0,\infty))$. In such situations, in order to obtain provable outer approximation, a more careful postprocessing of the solutions to the SDP is required, e.g., using the methods of \cite{c19neu,c24neu}.

%\begin{figure}
%	%\begin{picture}(50,180)
%	\begin{minipage}{0.24\textwidth}
%	\includegraphics[width=0.99\textwidth]{./Figures/Lorenz_d8_crop.png}
%	\end{minipage}
%	\begin{minipage}{0.24\textwidth}
%	\includegraphics[width=0.8\textwidth]{./Figures/Lorenz_d8_view2_crop.png}
%	\end{minipage}
%	
%	%\end{picture}
%	\caption{\footnotesize Bla.}
%	\label{fig:nonRecur_basic}
%\end{figure}

%\begin{figure}[thpb]
%      \centering
%      \framebox{\parbox{3in}{
%      \includegraphics[scale=0.3]{./Figures/Henon_degree8.eps}
%      \caption{H\'{e}non attractor, outer approximation, degree 10}
%      \label{FigHenon}
%      }}
%\end{figure}

\section{Conclusion}
We presented a linear programming characterization of global attractors as well as a hierarchy of semidefinite programming problems providing asymptotically sharp outer approximations of the global attractor. Therefore, the approach is simple to use, with freely available solvers available both for high-level modeling of the problems (e.g., Yalmip~\cite{c18neu} or Gloptipoly~\cite{c7neu}) and for the numerical solution (e.g., SeDuMi~\cite{c25neu} or MOSEK). In its core, the method builds on and extends the work of~\cite{c10neu} from invariant sets to global attractors, providing explicit constructions and proofs in our setting as well as treating the more subtle discrete time case.

A price for the linear structure or semidefinite structure respectively lies in the dimension of the problem. The size of the largest block of the SDP relaxations scales as $\mathcal{O}(\binom{n+k/2}{n})$ where $n$ is the state space dimension and $k$ the degree bound for the polynomials in the SDP relaxations. A possible approach to tackle this problem is to exploit symmetries or sparsity of the problem. While symmetry exploitation comes at no cost of accuracy~\cite{Symmetry}, obtaining a lossless (or at least convergence-preserving) sparse relaxation in this dynamical context is currently an open challenge (see~\cite{SparseRoA} for results in this direction).\\
%The global attractor's semicontinuity (\cite{c28neu}) with respect to perturbations of the dynamical system gives more stability when computing it.
Although there are further interesting topological properties to the global attractor some of them are invisible to our approach due to the fact that the primal linear program we use can only identify the global attractor up to a set of Lebesgue measure zero which excludes that we have control of certain topological properties. On the other hand, the dual problem is defined on continuous functions and hence allows more topological insights.\\
A future perspective might be proving convergence without the injectivity assumption for discrete systems. In addition, it is an interesting open question whether global attractors of partial differential equations could be approached in this way. A preliminary work on analysis and control gives promise~\cite{c11neu,c21neu}, although tractably characterizing subsets of the infinite-dimensional state space is likely to be more challenging.

\section{Acknowledgements}
The second author would like to thank David Goluskin for helpful discussions about this work. The authors want to thank Matthew D. Kvalheim for interesting suggestions for the paper and references.

This work has been supported by the European Union's Horizon 2020 research and innovation programme under the Marie Sk\l{}odowska-Curie Actions, grant agreement 813211 (POEMA).

\end{document}